\def\calP{\mathcal{P}}
\def\calC{\mathcal{C}}
\def\Z{\mathbb{Z}}
\def\N{\mathbb{N}}
\def\al{\alpha}
\def\et{\eta}
\def\ep{\varepsilon}
\def\de{\delta}
\def\la{\lambda}
\def\La{\Lambda}
\def\si{\sigma}
\def\de{{\delta}}
\def\om{\omega}
\def\Om{\Omega}
\def\Ph1{P^{(h_1)}}
\def\Ph2{P^{(h_2)}}
\def\b0{{\mathbf 0}}
\def\bnu{\bar\nu}
\def\lbnu{{\bar\nu}_{\lambda}}
\def\1lbnu{{\bnu}_{\lambda_1}}
\def\2lbnu{{\bnu}_{\lambda_2}}
\def\lnbnu{{\lbnu}^{(n)}}
\def\qbnu{{\bnu}_{\langle q\rangle}}
\def\qnbnu{{\qbnu}^{(n)}}
\def\sixn{\sigma_x^{(n)}}
\def\etxn{\eta_x^{(n)}}
\def\hep{\hat{\varepsilon}}
\def\tep{\tilde{\varepsilon}}
\def\hN{\hat{N}}
\newtheorem{theorem}{Theorem}[section]
\newtheorem{prop}[theorem]{Proposition}
\newtheorem{lem}[theorem]{Lemma}
\newtheorem{cor}[theorem]{Corollary}
\def\eqref#1{(\ref{#1})}
\begin{document}
\begin{frontmatter}

\title{Sharpness of the percolation transition in the two-dimensional contact process}
\runtitle{percolation transition in the 2D contact process}

\begin{aug}
\author[A]{\fnms{J.} \snm{van den Berg}\thanksref{t1}\ead[label=e1]{J.van.den.Berg@cwi.nl}\corref{}}
\runauthor{J. van den Berg}
\affiliation{CWI and VU University Amsterdam}
\thankstext{t1}{Supported in part by the Dutch BSIK/BRICKS project.}
\address[A]{CWI\\ Science Park 123\\ 1098 XG Amsterdam\\
The Netherlands\\
\printead{e1}} 
\end{aug}

\received{\smonth{7} \syear{2009}}
\revised{\smonth{3} \syear{2010}}

\begin{abstract}
For ordinary (independent) percolation on a large class of lattices it is well known that below the
critical percolation parameter $p_c$ the cluster size distribution has exponential decay and that
power-law behavior of this distribution can only occur at $p_c$. This behavior is often called
``sharpness of the percolation transition.''

For theoretical reasons, as well as motivated by applied research, there is an increasing
interest in percolation models with (weak) dependencies. For instance, biologists and agricultural researchers
have used (stationary
distributions of) certain two-dimensional contact-like processes to model vegetation patterns in
an arid landscape (see \cite{Kef07}). In that context occupied clusters are interpreted as patches of vegetation.
For some of these models it is reported in \cite{Kef07} that computer simulations indicate
power-law behavior in some interval of positive length of a model parameter. This would mean that in
these models the percolation transition is not sharp.

This motivated us to investigate similar questions for the ordinary (``basic'') $2D$ contact process with
parameter  $\lambda$. We
show, using techniques from Bollob\'as  and Riordan \cite{BoRi06a,BoRi08}, that for
the upper invariant measure ${\bnu}_{\lambda}$ of this process the percolation transition
is sharp. If $\lambda$ is such that (${\bnu}_{\lambda}$-a.s.) there are
no infinite clusters,
then for all parameter values below $\lambda$ the cluster-size distribution has exponential decay.
\end{abstract}

\begin{keyword}[class=AMS]
\kwd[Primary ]{60K35}
\kwd[; secondary ]{92D40}
\kwd{92D30}
\kwd{82B43}.
\end{keyword}
\begin{keyword}
\kwd{Percolation}
\kwd{contact process}
\kwd{sharp transition}
\kwd{approximate zero-one law}
\kwd{sharp thresholds}.
\end{keyword}

\end{frontmatter}

\section{Introduction and statement of the main result}\label{sec1}

The contact process was introduced as a stochastic model for the spread of an infection in a population
with a geometric structure, usually represented by the $d$-dimensional cubic lattice.
Each vertex $x$ of this lattice represents an individual whose state, infected ($1$) or healthy ($0$), at time $t$ is denoted
by $\si_x(t)$. The dynamic in this model is as follows: A vertex in state $0$ goes to state $1$ (``becomes infected'') at a rate equal to
$\la$ times the number of neighbors of that vertex that are in state $1$. A vertex in state $1$ goes to state $0$ (``recovers'')
at rate $1$. Here $\la$ is the parameter of the model called the infection rate.
In this paper we restrict to the case $d=2$. Depending on the applications one has in mind the terms ``infected''
and ``healthy'' are sometimes replaced by ``occupied'' and ``vacant,'' respectively. In the remainder of this paper we will use this latter
terminology.

The configuration at time $t$ is denoted by $\si(t) := (\si_x(t), x \in \Z^2)$. Let $\mu_t$ denote
the distribution of $\si_t$ when we start at time $0$ with all vertices occupied.
We will use the notation $|V|$ for the cardinality of a set $V$.

It is well known (from a standard coupling argument) that $\mu_t$ is stochastically dominated by $\mu_s$ if
$s \leq t$.
Hence $\mu_t$ converges weakly to a probability measure denoted by $\bnu$ (=$\bnu_{\la}$) as $t \rightarrow \infty$.
This measure $\bnu$ is called the upper invariant measure.
It is well known (again by standard coupling arguments) that $\bnu_{\la_2}$ stochastically dominates
$\bnu_{\la_1}$ if $\la_2 > \la_1$.
Realizations are typically denoted by $ \si = (\si_x, x \in \Z^2)$.
The occupied cluster of a vertex $x$ (i.e., the maximal connected component which contains $x$ and of which every vertex is occupied) is
denoted by ${\calC}_x$. (If $x$ is the origin $0$, we often omit the subscript.)

In this paper we study the sizes of occupied clusters
under the measure $\bnu$.
Motivation comes from work by Liggett and Steif \cite{LiSt06} who showed that for $\la$ sufficiently large
percolation occurs [i.e.,
$\lbnu(|\calC| = \infty) > 0$]
and from work by biologists and agricultural researchers.
In this latter work (see \cite{Kef07}) limit distributions of contact-like processes
(more complicated than the ``basic process'' described above) were used
to model vegetation patterns
in arid regions in Spain and North Africa. In this ``agricultural'' context an occupied cluster is interpreted as a ``vegetation patch.''
For some of these models it was claimed in \cite{Kef07} that simulations suggest power-law behavior of the cluster size distribution in an
interval of some parameter.

In ordinary percolation models it is known that below the percolation threshold the distribution of the cluster size
has exponential decay and that power-law behavior can only occur at the percolation threshold. Triggered by the above-mentioned
claim in \cite{Kef07} concerning very different behavior in ``their''
contact-like processes, we study this question for $\lbnu$.
Before we state our main result, Theorem \ref{mainthm}, we give a brief and somewhat informal overview of earlier work on
exponential-decay results in percolation to place our result in a broader context.

The proof of exponential decay for ordinary (independent) two-dimensional percolation goes back to the celebrated paper \cite{Ke80} by Kesten.
A crucial step in that paper is, somewhat informally and in ``modern'' terminology, that if  the probability of the
event $A$ that  there is an occupied crossing of a given, large, box (square)  is neither
close to $0$ nor close to $1$, the expected number of
so-called pivotal vertices (or, for bond percolation, pivotal edges) is large.
(These are vertices with the property that flipping the state of the vertex flips the occurrence/nonoccurrence of the event $A$.)
This step was proved in a ``constructive'' way with a ``geometric'' flavor.
The above-mentioned large expectation of pivotal vertices implies that the derivative (w.r.t. the parameter $p$) of the probability of $A$
is large. Hence, once the probability of $A$ is not very small, a small increase of $p$ makes it close to $1$.
This property would now be called a ``sharp-threshold'' phenomenon.

Moreover, by separate arguments, so-called finite-size criteria hold: if the probability of $A$ is smaller than some absolute constant
$\epsilon$, the cluster size is finite a.s. (and its distribution has exponential decay), while if it is larger than $1 - \epsilon$ the
system percolates. Combining these things gives exponential decay of the cluster size for all $p$ smaller than $p_c$.

Russo \cite{Ru82} proved a very general ``approximate zero-one law'' and showed that the above mentioned sharp-threshold phenomenon
can be obtained from this more general law using only a minimum of percolation arguments. In this way Kesten's
``constructive, geometric'' arguments could be avoided, which is very useful because carrying out such arguments turns out to be
(too) hard in many dependent models. We should note, however, that for independent percolation the ``constructive'' argument still gives
the shortest self-contained proof and that in {\em some} dependent models
(see \cite{BaCaMe09}) it gives the only currently known proof.

Unfortunately, the above-mentioned finite-size criteria involved a so-called RSW result of which no (``reasonably general'') extension to
dependent models was known. This explains why for a long time Russo's approximate zero-one law did not receive much attention in the
percolation community.
In the meantime sharper and more explicit results related to Russo's approximate zero-one law were obtained (in other
areas of probability and mathematics in general) by
Kahn, Kalai and Linial \cite{KaKaLi88},  Talagrand \cite{Ta94} and Friedgut and Kalai \cite{FrKa96}. (See also
\cite{BKKKL} and \cite{Ro08}.)

The importance for percolation of these sharp-threshold results became clear much later when Bollob\'as and Riordan \cite{BoRi06a} proved a more robust
version of the RSW theorem which, combined with a clever use of the sharp-threshold results, led to the proof of the long-standing
conjecture that the critical probability for random Voronoi percolation in the plane is $1/2$ (and that below $1/2$ this model has exponential
decay). The robustness of these arguments led to similar results for several other two-dimensional percolation
models (see \cite{BoRi06b,Be08,BoRi08}).

The last-mentioned paper proved for 2D lattice models exponential decay below the percolation threshold under the quite general
condition that, informally speaking, the
model has a ``nice finitary representation'' (in a well-defined sense) in terms of finite-valued independent random variables
(see also \cite{BeSt}).
It turned out that under that condition only a weak (not explicitly quantitative) form, close to that of Russo's \cite{Ru82},
of the sharp-threshold results was needed.
As an example it was shown that the Ising model (with fixed $\beta < \beta_c$ and external field parameter~$h$ playing the role of $p$ in
ordinary percolation) belongs to this class thus giving an alternative, more streamlined proof of the main result in
Higuchi's paper \cite{Hi93}. Here the role of finite-valued independent random variables was played by the ``independent
updates'' in a suitable discrete-time dynamics. Such a dynamics was possible by (among other things) the nearest-neighbor Gibbs property of
the Ising model.

This is a big difference with the contact process for which we do not know a
suitable discrete-time dynamics. Therefore, we are not able to derive exponential decay for this model from
Theorem 2.2 in \cite{Be08} but instead exploit the full quantitative nature of the sharp-threshold results
from \cite{KaKaLi88} and \cite{Ta94} and follow more closely the route
used in \cite{BoRi06a} and \cite{BoRi08}
for the Voronoi model and the Johnson--Mehl model (which, like the Voronoi model, is a model of planar tessellations but more
complicated than the Voronoi model). Yet another route, namely by using results in \cite{GrGr06}, might work if
$\bnu$ would satisfy the strong FKG condition which, however (as has been shown by Liggett), it does not. We should
also note here that
the exponential-decay arguments in \cite{AiBa87} and \cite{Me86}, which for ordinary percolation work in all dimensions, so far have (even in 2D)
no suitable analog for dependent percolation.

Our main result is the following theorem.

\begin{theorem}\label{mainthm}
Let $\la$ be such that
\[
\lbnu(|\calC| = \infty) = 0.
\]
Then, for every $\lambda' < \la$ there exist $C_1, C_2 > 0$ such that for all $n \geq 1$

\begin{equation}
\label{exp}
\bnu_{\la'}( |\calC| \geq n) \leq C_1 \exp(- C_2 n).
\end{equation}

\end{theorem}

Section~\ref{sec2} states properties of the contact process and other more general ingredients needed in the proof.
It also indicates (see the Remark below the proof of Lemma \ref{lem-fs}) an alternative proof of the earlier-mentioned
result by Liggett and Steif that percolation occurs for $\la$ large enough.

The proof of Theorem \ref{mainthm} is given in Section~\ref{sec3}. As mentioned before, the essence is still (as it was in \cite{Ke80}) to
show sharp-threshold behavior for certain crossing probabilities. To do this we
follow the main strategy in \cite{BoRi06a} and \cite{BoRi08}. However, the model-specific properties of the contact process
lead to many nontrivial differences in the steps. Therefore, and because the contact process is one of the main
random spatial models, the proof is given in detail.

We use several well-known results, techniques and terminology from percolation theory. For an introduction to, and general information on,
percolation see \cite{Gr99} and \cite{BoRi06c} and contact processes see \cite{Li85} and \cite{Li99}.

Throughout this paper we use the notation $V \subset\subset W$ to express that $V$ is a finite subset of $W$.

\section{Preliminaries}\label{sec2}

\subsection{Contact process ingredients}\label{sec2.1}

A well-known classical result for the contact process is that there is a critical value
$\lambda_c$ such that:
\begin{longlist}[(a)]
\item[(a)] If $\lambda < \lambda_c$ the contact process ``dies out'' and $\bnu$ is concentrated on the trivial configuration
where all vertices are vacant.
\item[(b)] If $\lambda > \lambda_c$, $\bnu$ is nontrivial and $\mu_t$ converges exponentially to $\bnu$ as
$t \rightarrow \infty$ (see \cite{Li99}, Theorem 2.30 and equation (2.31), which are based on the work by Bezuidenhout and Grimmett
\cite{BeGr90,BeGr91}):
For all $\lambda > \lambda_c$ there exist $C_3, C_4 > 0$ such that for all
$t > 0$
%
\begin{equation}
\label{expt}
\mu_t(\sigma_0 = 1) - \bnu(\sigma_0 = 1) \leq C_3 \exp(- C_4 t).
\end{equation}
\end{longlist}

Since $\bnu$ is dominated by $\mu_t$, statement (b) above implies by standard arguments:

\begin{theorem}\label{BeGr}
For all $\lambda > \lambda_c$ there exist $C_3, C_4 > 0$ such that for
all $t > 0$ and all $\Lambda \subset \subset \Z^2$
%
\begin{equation}
\label{expt2}
d_{\mathrm{TV}}(\mu_{t; \Lambda}, \bnu_{\lambda; \Lambda}) \leq |\Lambda| C_3 \exp(- C_4 t),
\end{equation}
where $d_{\mathrm{TV}}$ denotes variational distance
and $\mu_{t; \Lambda}$ (and $\bnu_{\lambda; \Lambda}$) are the restriction of
$\mu_t$ (resp. $\bnu_{\la}$) to $\La$.
\end{theorem}

\begin{rem*} It is trivial from the definition of $\la_c$ that for $\la$ below $p_c$ no percolation of occupied vertices occurs,
that is,
${\bnu}_{\la}(|\calC| = \infty) = 0$.
As we mentioned in the \hyperref[sec1]{Introduction}, Liggett and Steif \cite{LiSt06} showed that if $\la$ is large enough percolation does occur. It seems to be
widely believed (but no
proof is known yet) that the critical value for having percolation is strictly larger than $\la_c$.
(See \cite{LiSt06} where this problem is formulated.)
\end{rem*}

A well known and very useful way to describe the contact process is by means of a space--time diagram or graphical
representation (see, e.g., \cite{Li85} for historical background and references).
Consider for each vertex $v \in \Z^2$ its ``time axis'' $\{v\} \times (-\infty, \infty)$
and consider five independent Poisson point processes on this time axis: one with rate $\la$ for each of the
four directions (left, right, up, down) in the lattice
and one to indicate a transition from $1$ to $0$. The Poisson processes of the different vertices are independent of each other.

The interpretation of a  Poisson point on the time axis of $v$ at time $t$ for (say)
the direction ``right'' is that if $v$ is  in state $1$ at time $t$, it ``infects'' the vertex $v + (1,0)$. That is, if
the latter vertex is not occupied, it becomes occupied. To visualize this
we draw an arrow from $(v,t)$ to $(v +(1,0), t)$. We say that $t$ is the time coordinate of the arrow.
For each of the other three directions we act similarly. The interpretation of a
Poisson point in the fifth process on the time axis of $v$ at time $t$ is that if $v$ is occupied (i.e., in state $1$) at time $t^-$, it becomes
immediately vacant~($0$). In the space--time picture this is marked by the symbol $*$ at $(v,t)$
(see, e.g., \cite{Li99}, Part I, Section 1).

An active space--time path is a path that is allowed to move upward in time along the time axes without hitting $*$ points
and to jump from one time axis to another along, and in the direction of, an arrow. The time coordinates of the arrows
followed by a space--time path will be called the jumping times of the path. For $v, w \in \Z^2$ and $s < t$ we
denote by $(v,s) \rightarrow (w,t)$ that
there is an active path from $(v,s)$ to $(w,t)$. For the contact process starting at time $0$ with every vertex occupied,
a vertex $w$ is occupied at time $t > 0$ if and only if (in terms of the above-mentioned space--time diagram) for some vertex $v$
there is an active path from $(v,0)$ to $(w,t)$. In other words, the joint distribution of the random variables
\[
I\{\exists v \in \Z^2 \mbox{ s.t. } (v,0) \rightarrow (w,t) \}, \qquad   w \in
\Z^2
\]
is $\mu_t$. Similarly, $\bnu$ is the joint distribution of the random variables
\[
I\{\forall t < 0   \exists v \in \Z^2 \mbox{ s.t. } (v,t) \rightarrow (w,0) \}, \qquad   w \in \Z^2.
\]

We will often work with the following ``truncated'' random variables.
First some more notation: The distance between two vertices $v = (i_1, j_1)$ and $w = (i_2,j_2)$ is defined as
$\max(|i_1-i_2|, |j_1-j_2|)$ and denoted by $d(v,w)$. The distance $d(V,W)$ between two subsets $V$ and $W$ of $\Z^2$ is defined as
$\min(\{d(v,w)  \dvtx  v \in V, w \in W\})$. For $\La \subset \Z^2$, $\si_{\La}$ denotes the collection of random variables
$(\si_v, v \in \La)$; straightforward generalizations of this notation will also be used.\\
Let
\begin{equation}
\label{truncdef}
\quad \sixn := I\bigl\{\exists (y,t) \mbox{ with } d(x,y) = \bigl\lfloor \sqrt n \bigr\rfloor \mbox{ or } t = -\sqrt n \mbox{ s.t. } (y,t) \rightarrow
(x,0)\bigr\}
\end{equation}
and let ${\bnu}^{(n)} = {\bnu}^{(n)}_{\la}$ denote the joint distribution of the random variables $\sixn, x\in \Z^2$.

It is clear from this definition that if $\La$ and $\La'$ are two finite subsets of $\Z^2$ and
$d(\La, \La') > 2 \sqrt n$, then $\si_{\La}^{(n)}$ and ${\si}_{\La'}^{(n)}$ are independent. It is also
clear that $\si$ is stochastically dominated by $\si^{(n)}$.

From Theorem \ref{BeGr}, and simple estimates concerning the ``spatial spread of infection in a limited time interval,''
it follows that
\begin{eqnarray}
\label{expn}
  &&\forall \la > \la_c    \exists C_5, C_6 > 0 \mbox{ s.t. }
\forall \La \subset \subset \Z^2\nonumber
\\[-8pt]\\[-8pt]
  &&\qquad d_{\mathrm{TV}}\bigl(\si_\La, \si_\La^{(n)}\bigr) \leq |\La|   C_5 \exp(- C_6 n^{1/2}).\nonumber
\end{eqnarray}

\begin{rem*} In this paper we often deal with spatial boxes of length of order $n$ and distances of order $n$ to each other. The
somewhat arbitrary choice of $\sqrt n$ in the definition \eqref{truncdef} is just one of the many possible choices that are
convenient in such situations.
\end{rem*}

\begin{lem} \label{mix-lem}
Let $\La_1, \ldots, \La_k$, be $3 n \times n$-rectangles with the property that $d(\La_i, \La_j) > 2 \lfloor \sqrt n \rfloor$,
$1 \leq i < j \leq k$. Further let $A_1, A_2, \ldots, A_k$ be events\break that are completely determined by, and increasing in, the $\si$ variables on
$\La_1, \La_2, \ldots, \La_k$ respectively.
Then, for every $\la > \la_c$,
\begin{eqnarray}
\label{box-mix}
\prod_{i=1}^k {\bnu}_{\la}(A_i)  &\leq&  {\bnu}_{\la} \Biggl(\bigcap_{i = 1}^k A_i\Biggr) \leq
{\bnu}_{\la}^{(n)} \Biggl(\bigcap_{i = 1}^k A_i\Biggr)\nonumber
\\[-8pt]\\[-8pt]
  &=& \prod_{i=1}^k {\bnu}_{\la}^{(n)}(A_i) \leq \prod_{i = 1}^k \bigl({\bnu}_{\la}(A_i) + C_5 8 n^2 \exp\bigl(-C_6 \sqrt n\bigr)\bigr).\nonumber
\end{eqnarray}
\end{lem}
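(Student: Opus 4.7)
The plan is to prove the four inequalities/equality in \eqref{box-mix} one by one, each leveraging a different ingredient already in hand.

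The first inequality, $\prod_i \bnu_\la(A_i) \leq \bnu_\la(\cap_i A_i)$, is just the FKG inequality applied to the increasing events $A_1,\dots,A_k$. For the upper invariant measure this is standard: the finite-time measures $\mu_t$ starting from the all-occupied configuration are easily seen to satisfy FKG (by monotonicity in the Harris graphical construction, or by a coupling argument on a finite time window followed by a limit), and FKG is preserved under weak limits, so $\bnu_\la$ satisfies FKG.

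The second inequality, $\bnu_\la(\cap_i A_i) \leq \bnu_\la^{(n)}(\cap_i A_i)$, follows from the stochastic domination $\si \preceq \si^{(n)}$ that was noted just after the definition \eqref{truncdef}. Indeed, if the full configuration has $\si_x = 1$ then in particular there is an active path from some $(v,-\sqrt n)$ to $(x,0)$, which certifies $\sixn = 1$; hence $\si_x \leq \sixn$ pointwise, so $\si \preceq \si^{(n)}$. The intersection $\cap_i A_i$ of increasing events is increasing, which gives the bound.

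For the equality $\bnu_\la^{(n)}(\cap_i A_i) = \prod_i \bnu_\la^{(n)}(A_i)$, I would use the local nature of the truncated variables. The indicator $\sixn$ depends only on the Poisson data inside the space-time cylinder $\{y : d(x,y) \leq \lfloor\sqrt n\rfloor\} \times [-\sqrt n, 0]$. When $A_i$ is determined by $\si_{\La_i}^{(n)}$, it depends only on the Poisson data in the $\lfloor\sqrt n\rfloor$-spatial-neighbourhood of $\La_i$ during $[-\sqrt n,0]$. The separation assumption $d(\La_i,\La_j) > 2\lfloor\sqrt n\rfloor$ makes these neighbourhoods pairwise disjoint, and independence of the underlying Poisson processes on disjoint regions then gives independence of the $A_i$'s.

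Finally, for the last inequality I would couple $\si_{\La_i}$ and $\si_{\La_i}^{(n)}$ optimally, so that $|\bnu_\la^{(n)}(A_i) - \bnu_\la(A_i)|$ is bounded by the variational distance $d_V(\si_{\La_i},\si_{\La_i}^{(n)})$. By \eqref{expn} this is at most $|\La_i|\, C_5 \exp(-C_6 \sqrt n)$, and since $|\La_i| = 3n^2 \leq 8n^2$ the factor-by-factor bound $\bnu_\la^{(n)}(A_i) \leq \bnu_\la(A_i) + C_5\, 8n^2 \exp(-C_6\sqrt n)$ follows; multiplying across $i$ gives the right-hand side of \eqref{box-mix}. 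The main (and only mildly delicate) obstacle is bookkeeping in step three — making sure the region of dependence of $\sixn$ is really contained in the $\lfloor\sqrt n\rfloor$-neighbourhood of $x$ in the graphical representation, so that the separation hypothesis yields independence. Everything else is a direct application of FKG, stochastic domination, and the quantitative mixing bound \eqref{expn}.
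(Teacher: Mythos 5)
Your proposal is correct and follows exactly the paper's (very terse) proof: FKG/positive association for the first inequality, the stochastic domination $\si\preceq\si^{(n)}$ for the second, independence of the truncated variables on sets separated by more than $2\lfloor\sqrt n\rfloor$ (noted in the paper right after \eqref{truncdef}) for the equality, and the variational-distance bound \eqref{expn} for the last inequality. You have merely written out the details that the paper declares to ``follow immediately from the definitions.''
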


\begin{pf}
The first inequality comes from the well-known positive association of ${\bnu}_{\la}$ (which goes back to Harris's inequality)
and the last inequality comes from~\eqref{expn}.
The second inequality and the equality follow immediately from the definitions.
\end{pf}

Let, for a rectangular box $R$ in the lattice, $H(R)$ denote the event that there is an occupied horizontal crossing of $R$.
Further, let $H(n,m)$ denote the event that there is an occupied horizontal crossing of the box $[0,n] \times [0,m]$.
For vertical occupied crossings we use a similar notation, with $V$ instead of $H$.
From now on when we write ``crossing'' we always mean ``occupied crossing.''

\begin{lem}[(Finite-size criterion)]
\label{lem-fs}
\[
\exists \hep > 0,\qquad    \forall \la > \la_c,\qquad    \exists \hN,\qquad    \forall N \geq \hN
\]
the following holds:
\begin{longlist}[(a)]
\item[(a)]
\begin{equation} \label{fs1}
\mbox{If } \lbnu(V(3 N, N)) < \hep, \mbox{ the distribution of } |\calC| \mbox{ has exponential decay.}
\end{equation}
\item[(b)]
 \begin{equation}\label{fs2}
\mbox{If } \lbnu(H(3 N, N)) > 1- \hep, \mbox{ then } \lbnu(|\calC| = \infty) > 0.
\end{equation}
\end{longlist}
\end{lem}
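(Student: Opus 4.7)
The plan is to run the classical $2D$ finite-size-criterion arguments on a renormalized lattice, using Lemma \ref{mix-lem} as a substitute for independence. Pick $\hep$ small (in particular below $\min(1-p_c^{\textrm{site}}(\Z^2),\,p_c^{\textrm{site}}(\Z^2))/10$), and then $\hN$ large enough that on the scales we use the mixing error $C_5\,8n^2\exp(-C_6\sqrt n)$ in \eqref{box-mix} is negligible compared with $\hep$. I use throughout that the lattice symmetries of the contact process give $\lbnu$ the full $\pi/2$ rotational symmetry, so I may freely interchange $V$ and $H$ (with transposed arguments).

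For part (b): declare $u\in(N\Z)^2$ \emph{good} if both the $3N\times N$ horizontal rectangle centred at $u$ has a horizontal occupied crossing and the $N\times 3N$ vertical rectangle centred at $u$ has a vertical one. By symmetry and FKG, $\lbnu(u\text{ good})>1-2\hep$. A short topological observation shows that the horizontal crossing at $u$ and the vertical crossing at a neighbour $u+(N,0)$ must meet in the overlap of their defining rectangles, so crossings of neighbouring good sites glue together into a single occupied cluster. After sub-sampling the renormalized lattice to achieve the $2\lfloor\sqrt n\rfloor$-separation required by Lemma \ref{mix-lem}, the good-site field stochastically dominates an i.i.d.\ Bernoulli site field of density $>1-2\hep-o(1)>p_c^{\textrm{site}}(\Z^2)$, producing an infinite cluster of good sites a.s.\ and hence \eqref{fs2}.

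For part (a): \emph{Step 1 (radius decay).} Declare $u\in(N\Z)^2$ \emph{bad} if either of its two defining rectangles has a short-way occupied crossing; by the hypothesis and symmetry $\lbnu(u\text{ bad})<2\hep$. Any occupied path from $\b0$ to $\partial([-R,R]^2)$ crosses the boundary of every block it exits, and each such block-boundary crossing forces a short-way rectangle crossing at a nearby lattice site, producing a $\Z^2$-connected chain of bad sites of length $\gtrsim R/N$. Sub-sampling and Lemma \ref{mix-lem} then dominate the bad field above by subcritical Bernoulli site percolation, whose exponential cluster-size decay yields $\lbnu(\b0\lra\partial([-R,R]^2))\leq C e^{-cR}$. \emph{Step 2 (volume decay).} Pick $N_0\geq\hN$ so large that $\lbnu(\b0\lra\partial([-N_0,N_0]^2))<\hep$; tile $\Z^2$ by side-$N_0$ blocks; declare a block \emph{very bad} if some vertex in it is occupied and connected to the block boundary. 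Lemma \ref{mix-lem} again dominates the very-bad-block field by a subcritical Bernoulli site field, and any occupied cluster of size $\geq n$ occupies at least $n/N_0^2$ very-bad blocks forming a connected subset of the block-adjacency graph, so $\lbnu(|\calC|\geq n)\leq C_1 e^{-C_2 n}$, proving \eqref{fs1}.

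The main obstacles I expect are: (i) the topological gluing/forcing statements that relate occupied crossings at the original scale to short-way rectangle crossings at nearby renormalized lattice sites (standard in spirit, but fiddly to set up cleanly so that both the $H$/$V$ meeting argument in (b) and the bad-chain argument in (a) come out); (ii) maintaining the $2\lfloor\sqrt n\rfloor$-separation hypothesis of Lemma \ref{mix-lem} uniformly across all applications via appropriate sub-sampling of the renormalized lattices, without losing too much in the effective Bernoulli density or requiring $\hep$ to depend on $\la$; and (iii) carrying out Step 2 of (a), since the radius-to-volume upgrade — free from BK in the independent setting — here needs a second block argument with Lemma \ref{mix-lem} in place of independence on a new scale $N_0$ that depends on the output of Step 1.
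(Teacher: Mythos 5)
Your part (a) is, in outline, the argument the paper invokes (Kesten's block/Peierls argument adapted to mixing measures, cf.\ Lemma 3.8 of \cite{Be08}); one caveat: what you actually get from Lemma \ref{mix-lem} is not stochastic domination of the bad field \emph{from above} by a Bernoulli field (no LSS-type theorem runs in that direction), but rather the product upper bound $\bnu(\cap_i A_i)\leq\prod_i(\bnu(A_i)+\mathrm{err})$ for a well-separated subfamily extracted from each candidate chain of bad blocks, followed by a union bound over chains. That is exactly what the Peierls count needs, so Step 1 and Step 2 go through once rephrased this way, with $\hep$ universal and only $\hN$ depending on $\la$ through $C_5,C_6$.

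Part (b) has a genuine gap. Your mechanism is: glue crossings of \emph{overlapping} neighbouring $3N\times N$ blocks, then ``sub-sample to achieve the $2\lfloor\sqrt N\rfloor$-separation required by Lemma \ref{mix-lem}'' and dominate by supercritical Bernoulli. These two requirements are incompatible: the overlap needed for gluing is of order $N$, so adjacent blocks are at distance $0$ and Lemma \ref{mix-lem} never applies to them, while after sub-sampling the retained blocks are disjoint and their crossings no longer connect, so an infinite cluster of retained good sites yields no infinite occupied cluster. Nor can you rescue this via a $k$-dependence/LSS domination theorem: the good-site field under $\bnu$ has infinite-range correlations, and the finite-range truncation $\si^{(n)}$ dominates $\si$ from \emph{above}, which is the wrong direction when the goal is to prove that $\si$ itself percolates. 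The paper avoids this entirely by the Chayes--Chayes route: bound $1-\lbnu(H(3\cdot 3^{k+1}n,3^k n))$ by the probability that two crossing events in well-separated sub-rectangles (separated by a middle strip, whence the factor $3$) both fail, which Lemma \ref{mix-lem} (in its variational-distance form) turns into a squaring inequality $1-r_{k+1}\leq C(1-r_k)^2+\mathrm{err}$; then $\sum_k(1-r_k)<\infty$, Borel--Cantelli, and pasting of alternating horizontal and vertical crossings of geometrically growing rectangles give an infinite occupied path. You should replace your renormalization argument for (b) by this iteration (or supply a domination theorem valid under exponential, rather than finite-range, decay of dependence, which Lemma \ref{mix-lem} alone does not provide).
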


\begin{pf}
The analog of part (a) was proved for ordinary percolation by Kesten in \cite{Ke81} by a block argument.
His proof can be, and has been in the literature, easily adapted to dependent models with sufficient spatial mixing (e.g., see
\cite{Be08}, Lemma 3.8). The mixing property described by \eqref{box-mix} above is more than enough
for this purpose.  Essential is that the ``extra term'' [here $C_5 8 n^2 \exp(-C_6 \sqrt n)$] in the factors in the right-hand side of
 \eqref{box-mix} goes to
$0$ as $n \rightarrow \infty$.

The analog of (b) was proved for ordinary percolation in \cite{ChCh84} by giving a suitable (and now well known)
lower bound for the probability of having a horizontal crossing of a $4 n \times 2n$ box
in terms of the probability of the analogous event for a $2 n \times n$ box.
If for some $n$ this probability is sufficiently close to $1$, one can then iterate this procedure and
conclude that the probability, say $r_k$, of a crossing of a given $2^{k+1} n \times 2^k n$ box goes very fast to $1$
as $k \rightarrow \infty$. (So
fast that $\sum_k (1-r_k)$ is finite.) By Borel--Cantelli it then follows that a.s. there is a $K$, such that for all odd $k \geq K$,
there is horizontal crossing of the rectangle $[0, 2^{k+1} n] \times [0, 2^k n]$ and for all even $k \geq K$
there is a vertical crossing of $[0, 2^k n] \times [0, 2^{k+1} n]$. By pasting together these crossings one gets an infinite
occupied path.
Hence, the system percolates. For dependent percolation models with sufficiently strong mixing properties
simple modifications of such arguments can be obtained (and have been obtained in the literature). Informally speaking, instead of blowing the
rectangles up by a factor $2$, this
is then done by a factor $3$ to obtain an extra strip in the middle of the next rectangle in order to separate the other two strips so that
the crossing events of these other two strips are almost independent. See, for instance, \cite{BeBrVa08}, proof of Theorem 4.8, for a case
where this has been carried out in detail. In practically the same way this can be carried out in our current situation by using Lemma \ref{mix-lem}
above in the same way as Lemma 2.3 was used in \cite{BeBrVa08}, proof of Theorem 5.1.
\end{pf}

\begin{rem*} For our purpose (as will become clear later in this paper) we do not need $\hat N$ in
Lemma \ref{lem-fs} to be uniform in $\la$ if
$\la$ is bounded away from $\la_c$. However, although this is not explicitly stated in the literature but pointed out to me by Geoffrey
Grimmett  (private communication), \eqref{expt} and related bounds are, by the nature of their proofs in the literature, uniform in
$\la$, if $\la$ is bounded away from $\la_c$. Now such uniformity would also give
uniformity of $\hat N$, in the sense mentioned above. This then, in turn, would clearly give an alternative proof of the earlier
mentioned result by Liggett
and Steif that $\bnu_{\la}$ has percolation if $\la$ is large enough: Take some $\la' > \la_c$. Fix $N$ such that for all $\la > \la'$
the ``if-then statement'' (b) in Lemma \ref{lem-fs} holds.
It is easy to see that, with $N$ fixed, if $\la > \la'$ is large enough, the {\it condition} in that ``if-then statement''
(b) holds; hence
$\lbnu(|\calC_O| = \infty) > 0$. Since this result is already known and not the main subject of this paper, we do not work out the details of
such alternative proof. It should also be noted that Liggett and Steif prove more than percolation of $\bnu$. They show,
for large~$\la$, domination of high-density product measures.
\end{rem*}

The following involves what in the \hyperref[sec1]{Introduction} was called a robust version of RSW.

\begin{prop} \label{prop-RSW}
Let $\la > \la_c$. If
\[
\mbox{for {\em some} } \rho > 0 \qquad \limsup_{n \rightarrow \infty} \lbnu(H(\rho n, n)) > 0,
\]
then
\[
\mbox{for {\em all} } \rho > 0\qquad \limsup_{n \rightarrow \infty} \lbnu(H(\rho n, n)) > 0.
\]
\end{prop}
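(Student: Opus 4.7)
The plan is to follow the robust RSW strategy of Bollob\'as and Riordan \cite{BoRi06a,BoRi08}, using the approximate-independence tool Lemma \ref{mix-lem} as a substitute for exact independence. First note the easy direction: if $\rho' \leq \rho$, then $H(\rho n, n) \subseteq H(\rho' n, n)$, since any occupied horizontal crossing $\pi$ of $[0, \rho n] \times [0, n]$ meets the vertical line $\{x = \rho' n\}$, and the initial segment of $\pi$ up to its first such meeting is an occupied horizontal crossing of $[0, \rho' n] \times [0, n]$. Hence the nontrivial content of the proposition is to pass from aspect ratio $\rho$ (where positivity of the $\limsup$ is assumed along some subsequence $n_k \to \infty$) to arbitrarily large aspect ratios.

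The heart of the argument is an RSW-style doubling step: from $\limsup_n \lbnu(H(\rho n, n)) > 0$, deduce $\limsup_n \lbnu(H(\tilde{\rho} n, n)) > 0$ for some $\tilde{\rho} > \rho$; iterating a bounded number of times then yields any $\rho' > 0$. For the doubling step, consider the overlapping rectangles $R_1 := [0, \rho n] \times [0, n]$, $R_2 := [\rho n/2, 3\rho n/2] \times [0, n]$, and their overlap strip $M := R_1 \cap R_2$. On the event $H(R_1) \cap H(R_2) \cap V(M)$ the three subpaths concatenate into a horizontal crossing of $[0, 3 \rho n/2] \times [0, n]$. Since $\lbnu$ enjoys positive association (Harris-FKG, inherited from the graphical representation), this joint probability dominates $\lbnu(H(R_1)) \lbnu(H(R_2)) \lbnu(V(M))$; positivity of $\lbnu(V(M))$ along $n_k$ is then the classical output of the square-root trick applied to the reflection-symmetric strip $M$, combining horizontal crossings of suitable upper/lower sub-rectangles with FKG and the lattice symmetry $\lbnu(V(a,b)) = \lbnu(H(b,a))$.

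The main obstacle is executing the square-root/lowest-crossing arguments in a dependent setting, where conditioning on a leftmost or lowest crossing is awkward. The Bollob\'as-Riordan remedy, which I would import almost verbatim, is to replace $\lbnu$ by the truncated measure $\lnbnu$: by Lemma \ref{mix-lem}, increasing events on well-separated rectangles become exactly independent under $\lnbnu$, and the total-variation cost, bounded by $|\La| C_5 \exp(-C_6 \sqrt n)$ per box via \eqref{expn}, is absorbed at the end. Each doubling step involves only a bounded number of rectangles of side $O(n)$, so the accumulated error per step is $O(n^2 \exp(-C_6 \sqrt n))$, negligible against the fixed positive lower bound on crossing probabilities along $n_k$. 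Finitely many doubling iterations then deliver the proposition.
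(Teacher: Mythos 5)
Your proposal and the paper diverge already in intent: the paper does not reprove RSW but simply verifies that $\lbnu$ satisfies the four hypotheses (mixing via Lemma \ref{mix-lem}, path-pasting, positive association, invariance under lattice symmetries) of the general Bollob\'as--Riordan-type theorem and cites \cite{BoRi06a}, \cite{BoRi06b}, \cite{BeBrVa08} and \cite{Be08}. Attempting to sketch the argument itself is legitimate, but your sketch has a genuine gap at exactly the point where all the difficulty of RSW lives. In your doubling step you need $\limsup_n \lbnu(V(M)) > 0$ where $M$ is the overlap strip, i.e.\ a crossing of a rectangle in the \emph{hard} direction (and, if the hypothesis only holds for some $\rho < 1$, you already need to produce square crossings from easy-direction crossings before the doubling can even start). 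You dismiss this as ``the classical output of the square-root trick applied to the reflection-symmetric strip $M$'', but the square-root trick plus FKG plus the symmetry $\lbnu(V(a,b)) = \lbnu(H(b,a))$ only converts a bound on a union of symmetric increasing events into a bound on one of them; it cannot by itself turn easy-direction or equal-aspect crossings into hard-direction crossings of a longer rectangle. In the classical proof that conversion is done by conditioning on the lowest crossing (Russo, Seymour--Welsh), which you correctly identify as unavailable here.

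Your proposed remedy, however, does not repair this. Replacing $\lbnu$ by the truncated measure $\lnbnu$ restores independence only between increasing events supported on rectangles at mutual distance greater than $2\sqrt n$; it does nothing for the lowest-crossing step, which is a conditioning \emph{within} a single rectangle on non-monotone information. The actual Bollob\'as--Riordan innovation is a different geometric argument: one classifies horizontal crossings of a square according to where they meet the right-hand side, uses top--bottom reflection symmetry and the square-root trick to find, with probability bounded below, a crossing ending in a prescribed sub-segment, and then joins two such crossings of overlapping squares via additional symmetric events, all organized around the $\limsup$ so that one may pass to subsequences and compare two scales. None of this appears in your sketch, so as written the proposal does not yield the proposition. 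If you want a self-contained proof you must import that argument (as adapted to lattice models with mixing in \cite{BeBrVa08} and \cite{Be08}), using Lemma \ref{mix-lem} only where the original uses independence of distant regions.
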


\begin{pf}
A similar result was proved by Bollob\'as and Riordan \cite{BoRi06a} for the random Voronoi model (and slightly modified to the above
form in \cite{BeBrVa08}).
As remarked in \cite{BoRi06b} (see also \cite{Be08}, Section 3.4, the first three paragraphs) it holds for many
percolation models on $\Z^2$, namely, those that
satisfy: (i) a sufficiently strong mixing
property, (ii) a straightforward ``geometric'' condition
about lattice paths (which enables pasting together paths that cross each other), (iii) positive association and (iv) the condition that $\bnu$
is invariant under the symmetries of $\Z^2$.

Lemma \ref{mix-lem} above is more than needed for (i) and it is easy to see that the probability measures ${\bnu}_{\la}$, $\la > \la_c$ also
satisfies the other conditions.
\end{pf}

\subsection{Influence and sharp-threshold results}
\label{Sharp-threshold results}

Let $\Om = \{0,1\}^n$ and let $P_p$ denote the product measure with parameter $p$ on $\Om$.
Let $A$ be an event (i.e., a subset of $\Om$) and let, for $1 \leq i \leq n$, $I_i$ denote the probability that
$i$ is pivotal for $A$. It is often called the influence of $i$. More precisely,
\[
I_i := P_p\bigl(\bigl\{\om \in \Om   \dvtx   \mbox{ exactly one of } \om \mbox{ and } \om^{(i)} \mbox{ is in } A\bigr\}\bigr),
\]
where $\om^{(i)}$ is the configuration obtained from $\om$ by flipping the $i$th component of~$\om$.
Talagrand (\cite{Ta94}, Corollary 1.2) proved the following theorem.   See also \cite{FrKa96} and \cite{KaKaLi88} for strongly related results.
Note that our $I_i$ differs a factor $1/p$ from
the expression $\mu_p(A_i)$ in Talagrand's paper.

\begin{theorem}
\label{tal}
\begin{equation} \label{tal-eq}
\sum_i I_i \geq \frac{P_p(A) (1 - P_p(A))}{K p \log(2/p)} \log\biggl(\frac{1}{p \max_i I_i}\biggr),
\end{equation}
where $K$ is a universal positive constant.
\end{theorem}

\begin{rem*}
\begin{longlist}[(ii)]
\item[(i)] Strictly speaking Talagrand's result is slightly stronger than Theorem~\ref{tal} above
but in the case of small $p$ (to which we will apply it), it makes essentially no difference.

\item[(ii)] If the event $A$ is increasing (i.e., its indicator function is a coordinate-wise nondecreasing function on $\Om$),
the left-hand side of \eqref{tal-eq} is, according to Russo's formula, equal to $d / dp   P_p(A)$.
By this it is easy to see that Theorem~\ref{tal} implies that if, throughout some interval, say $(p_1, p_2)$,
$\max_i I_i$ is ``very small'' and $P_{p_1}(A)$ is ``not too small,'' then $P_{p_2}(A)$ is ``close to $1$.''
For such reasons Theorem~\ref{tal} and related theorems are often indicated by the name ``sharp-threshold'' results, in addition to names like
``influence results.''
\end{longlist}
\end{rem*}

Now suppose there are at least $m$ indices $i$ with the property that $I_i = \max_j I_j$.
There are two possibilities:

\begin{longlist}[(a)]
\item[(a)] $\max_i I_i \leq \frac{\log m}{p m}.$ If this holds then, by Theorem \ref{tal},
\begin{equation}\label{talm-eq}
\quad \sum_i I_i \geq \frac{P_p(A) (1 - P_p(A))}{K p \log(2/p)} \log\biggl(\frac{m}{\log m}\biggr) \geq \frac{P_p(A) (1 - P_p(A))}
{\tilde K p \log(2/p)} \log m
\end{equation}
for some universal constant $\tilde K$.

\item[(b)] $\max_i I_i \geq \frac{\log m}{p m}$.
Then trivially,
\[
\sum_i I_i \geq m \max_i I_i \geq \frac{\log m}{p}
\]
which is larger than or equal to some universal constant times the right-hand side of \eqref{talm-eq}. Hence, by adjusting
the value $K$ if needed, the following holds:
\end{longlist}

\begin{cor}
\label{talm-cor}
Let $m$ denote the cardinality of $\{i   \dvtx   I_i = \max_j I_j\}$. Then
\[
\sum_i I_i \geq \frac{P_p(A) (1 - P_p(A))}{K p \log(2/p)} \log m.
\]
\end{cor}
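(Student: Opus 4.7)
The plan is essentially to formalize the two-case dichotomy that is already sketched in the paragraphs immediately preceding the statement, and then to check that in each case the lower bound on $\sum_i I_i$ dominates a universal constant multiple of $\frac{P_p(A)(1-P_p(A))}{p\log(2/p)}\,\log m$.

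First I would set $M:=\max_j I_j$ and split on the size of $M$ relative to $\frac{\log m}{pm}$. In case (a), when $M\leq \frac{\log m}{pm}$, I plug directly into Talagrand's inequality (Theorem~\ref{tal}) to obtain
\begin{equation*}
\sum_i I_i \geq \frac{P_p(A)(1-P_p(A))}{K\,p\log(2/p)}\,\log\!\Bigl(\frac{m}{\log m}\Bigr).
\end{equation*}
Then I would observe that $\log(m/\log m)=\log m-\log\log m$, so for $m$ above an absolute constant threshold this quantity is at least $\tfrac{1}{2}\log m$; below that threshold the statement is trivial because $\log m$ is bounded and $\sum_i I_i\geq 0$ can be absorbed by increasing the universal constant. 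This yields the required bound with constant $2K$ (or $\tilde K$, after adjustment).

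In case (b), when $M>\frac{\log m}{pm}$, the hypothesis that at least $m$ coordinates achieve the maximum gives the trivial bound
\begin{equation*}
\sum_i I_i \;\geq\; m\cdot M \;\geq\; \frac{\log m}{p}.
\end{equation*}
To compare this with the target, I would use that $P_p(A)(1-P_p(A))\leq \tfrac14$ and $\log(2/p)\geq \log 2$, so the target is bounded above by $\frac{1}{4K\log 2}\cdot\frac{\log m}{p}$, which is at most the bound just derived (after inflating $K$ suitably). The two cases together give the corollary with a single universal constant.

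The only potentially delicate step is the constant bookkeeping: one has to check that a common constant $K$ works in both cases, and that the substitution $\log(m/\log m)\mapsto \log m$ can be absorbed uniformly in $m$ (or handled by noting triviality for small $m$). Neither is a real obstacle; there is no new probabilistic input beyond Theorem~\ref{tal}. So the main ``obstacle'' is purely organizational, ensuring the constant $K$ in the statement really is universal (independent of $p$, $n$, $m$, and $A$).
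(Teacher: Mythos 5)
Your proof is correct and follows exactly the two-case dichotomy (splitting on whether $\max_i I_i$ exceeds $\frac{\log m}{pm}$) that the paper itself uses to derive this corollary from Theorem~\ref{tal}. The only cosmetic point is that the absorption $\log(m/\log m)\ge\tfrac12\log m$ actually holds for every $m\ge 2$ (since $\sqrt m\ge\log m$), so no small-$m$ threshold argument is needed at all.
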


\begin{rem*} The case $m = n$ of this corollary is essentially in \cite{FrKa96} where it is derived from the results/methods in \cite{KaKaLi88}.
The general case, and its derivation from Theorem \ref{tal}, was shown to me by Oliver Riordan (private communication; see also \cite{BoRi09}).
\end{rem*}

We will use a generalization of Theorem \ref{tal} and Corollary \ref{talm-cor} as described below.

Let $\Om$ be as before. Let $V \subset \{1, \ldots, n\}$ and let $0 < p_1, p_2 <1$. Let $P_{p_1, p_2}$ denote
the product measure on $\Om$
under which each component with index in $V$ is $1$ with probability $p_1$ and each with index in $V^c$
is $1$ with probability $p_2$. The generalization of Theorem \ref{tal} is the
following theorem.

\begin{theorem}
\label{tal-gen}
\[
 \sum_i I_i \geq
\frac{P_{p_1,p_2}(A) (1 - P_{p_1,p_2}(A))}{K'   \max(p_1, p_2) \log(2/\min(p_1, p_2))}
\log\biggl(\frac{1}{\max(p_1, p_2) \max_i I_i}\biggr),
\]
where $K'$ is a universal constant.
\end{theorem}

\begin{rem*} In \cite{BoRi09} (see Theorem 5 in \cite{BoRi09} and the discussion below that theorem) it is indicated how
to prove Theorem \ref{tal-gen} by modifications of the proofs in Talagrand's paper. An alternative way is to start from
the special case for $p = 1/2$ of Theorem \ref{tal} above and obtain the full case (and its generalization where
different coordinates may have a
different parameter $p$) from that special case by, informally speaking, representing (approximately) the toss of a
biased coin by a combination of tosses of several fair coins.
\end{rem*}

From Theorem \ref{tal-gen} the following corollary is obtained in exactly the same way as Corollary \ref{talm-cor} was
obtained from Theorem \ref{tal}.

\begin{cor}
\label{talm-cor-gen}
Let $m$ denote the cardinality of $\{i   :   I_i = \max_j I_j\}$. Then
\[
\sum_i I_i \geq
\frac{P_{p_1,p_2}(A) (1 - P_{p_1,p_2}(A))}{K'  \max(p_1, p_2)   \log(2/(\min(p_1, p_2)))} \log m.
\]
\end{cor}

Combined with a straightforward modification of the earlier-mentioned Russo's formula this gives:

\begin{cor}
\label{talm-cor-dif}
Let $m$ be as in the previous corollary. If the event $A$ is increasing in the coordinates with parameter $p_1$ and
decreasing in the coordinates with parameter $p_2$, then
\begin{eqnarray} \label{talm-cor-dif-eq}
&&\frac{\partial}{\partial p_1} P_{p_1, p_2}(A) - \frac{\partial}{\partial p_2} P_{p_1, p_2}(A)\nonumber
\\[-8pt]\\[-8pt]
&&\qquad\geq \frac{P_{p_1,p_2}(A) (1 - P_{p_1,p_2}(A))}{K'  \max(p_1, p_2)   \log(2/(\min(p_1, p_2))} \log m.\nonumber
\end{eqnarray}
\end{cor}

\section[Proof of Theorem 1.1]{Proof of Theorem \protect\ref{mainthm}}\label{sec3}

Let $\la_1 > \la_c$ be such that under ${\1lbnu}$ the cluster size distribution does not have exponential decay. Let
$\la_2 > \la_1$. We will show that
${\bnu}_{\la_2}(|\calC_O| = \infty) > 0$.
This will immediately imply Theorem \ref{mainthm}.

Let $\la_1$ be as fixed above and let $\hep$ and $\hN = \hN(\la_1)$ be as in Lemma \ref{lem-fs}.
Let $L_n$ denote a specific $4 n \times n$ rectangle; its precise choice does not matter but for later convenience we
choose $[n, 5 n] \times [n, 2 n]$.
By Lemma \ref{lem-fs} we have that
\[
 \1lbnu(V(3n,n)) > \hep\qquad \mbox{for all } n \geq \hN
\]
which by Proposition \ref{prop-RSW} implies $\limsup_{n \rightarrow \infty} \1lbnu(H(L_n)) > 0$;
so there exists an $\tep > 0$ and a sequence $n_1, n_2, \ldots$ such that
%
\begin{equation}
\label{lbLcr}
\1lbnu(H(L_{n_i})) > \tep\qquad    \mbox{for all } i.
\end{equation}
From now on we consider such fixed sequence.

In the \hyperref[sec1]{Introduction} to the contact process in the beginning of Section~\ref{sec1} we assumed that the recovery rate is $1$.
Of course the contact process
with infection rate~$\la$ and recovery rate $\de$ is simply a time-rescaled version of the contact process
with infection rate $\la/\de$ and recovery rate $1$. In particular, these two contact processes have exactly
the same upper invariant measure.
For application of the results in Section \ref{Sharp-threshold results} it is more convenient to work
with one-parameter Poisson processes for which at each site of the lattice the
total rate of all the Poisson processes is constant, say $1$. Therefore, we
consider the contact process with infection
rate $q/4$ and recovery rate $1-q$, where now $q \in (0,1)$ is the parameter.
Note that in terms of the space--time diagram this means that on each time axis
we have a marked Poisson point process with density $1$ and each point corresponds with a $\rightarrow$, $\leftarrow$,
$\downarrow$, $\uparrow$ or $*$ with probability $q/4$, $q/4$, $q/4$, $q/4$ and $1-q$, respectively.
With respect to this new parameter $q$ we use the notation $\calP_q$ for the law governing the above-marked Poisson point process and
the notation $\qbnu$ for the upper invariant measure of the corresponding contact process.
From the above it is immediate that
\begin{equation}
\label{q-la-trans}
\qbnu = \bnu_{{q/(4(1-q))}}, \qquad   q \in (0,1),
\end{equation}
or, equivalently, $\lbnu = \bnu_{\langle 4 \lambda / (1+4 \lambda)\rangle}$, for $\la \in (0, \infty)$.
In particular, by \eqref{lbLcr},
\begin{equation}
\label{lbQcr}
{\bnu}_{\langle q_1\rangle}(H(L_{n_i})) > \tep  \qquad  \mbox{ for all } i,
\end{equation}
where $q_1 = 4 \la_1 / (1 + 4 \la_1)$.

Let $\qnbnu$ be the distribution of $(\et_x^{(n)},   x \in \Z^d)$ defined by [compare with \eqref{truncdef}]
\begin{eqnarray}
\label{truncdefsig}
\etxn &:=& I\bigl\{\exists (y,t) \mbox{ with } d(x,y) = \bigl\lfloor \sqrt n \bigr\rfloor \mbox{ or }\nonumber
\\[-8pt]\\[-8pt]
&&\hspace*{11pt}t < -\sqrt n \mbox{ s.t. } (y,t)
\stackrel{(q, 1 - q)}{\rightarrow} (x,0)\bigr\},\nonumber
\end{eqnarray}
where $(y,t) \stackrel{(q, 1 - q)}{\rightarrow} (x,0)$ denotes that there is a space--time path from
$(y,t)$ to $(x,0)$ in the space--time diagram with Poisson intensity $q/4$ for each of the four types of arrows
and Poisson intensity $1-q$ for $*$'s.

It is clear that $\qnbnu$ dominates $\qbnu$; hence, by \eqref{lbQcr},
%
\begin{equation}
\label{lbLcr2sig}
\bnu_{\langle q_1\rangle}^{(n_i)}(H(L_{n_i})) > \tep\qquad    \mbox{ for all } i.
\end{equation}

Although $\lnbnu$ is, of course, not the same as $\bnu_{\langle 4 \lambda/(1+4 \lambda)\rangle}^{(n)}$,
it is straightforward to get analogs of the earlier ``approximation lemmas.'' In particular we get, as an analog of \eqref{expn}, \\
\begin{eqnarray}
\label{expnsig}
  &&\forall q > 4 \la_c/(1 + 4 \la_c)   \exists C_7, C_8 > 0 \mbox{ s.t. }\nonumber
\forall \La \subset \subset \Z^2
\\[-8pt]\\[-8pt]
  &&\qquad d_{\mathrm{TV}}\bigl(\bnu_{\langle q\rangle; \La}, \bnu_{\langle q\rangle; \La}^{(n)}\bigr) \leq |\La|   C_7 \exp(- C_8 n^{1/2}).\nonumber
\end{eqnarray}
Throughout the proof of Theorem \ref{mainthm}, except at the very end (see Proposition~\ref{H-bound}, where we translate back
to parameter $\la$), we will work with parameter $q$ as described above.

A key step toward application of the results in Section \ref{Sharp-threshold results} is a suitable ``time-discretized''
version of $\qnbnu$. A significant obstacle is to obtain an analog of \eqref{lbLcr2sig} for these discrete variables.

Recall from the beginning of this section that $L_n$ is the box $[n,5 n] \times [n, 2 n]$. To ``get ample room for the underlying
Poisson points'' we also consider the larger box $B_n : = [0,6 n] \times [0, 3 n]$.
Let $\qnbnu$ be as before.
Note that the collection\vspace*{-2pt} of random variables $(\etxn,   x \in L_n)$ is completely determined by the (marked)
Poisson points in the space--time area
$\mathit{ST}(n) := B_n \times [-n,0]$. (In fact only a subset of that area is involved but for
convenience we consider this whole area.)
Let as before, $\calP_{q}$ denote the probability measure governing the marked Poisson points.

Let $0 < \al < 1$. Later we choose $\al$ sufficiently small. Let $\delta = n^{-\al}$.

\begin{defn}\label{stab-def}
We say that an active space--time path $\pi$ is $\delta$-stable if the following hold:
\begin{longlist}[(ii)]
\item[(i)] If $s$ and $t$ are two different jump times of $\pi$, then $|t - s| > \delta$.

\item[(ii)] If $(y,s)$ is the starting point or endpoint of an arrow of $\pi$ and there is a $*$ at $(y,t)$, then
$|t - s| > \delta$.
\end{longlist}
\end{defn}

The following lemma (and the global structure of its proof) is the analog of Theorem 6.1 for the Voronoi model in \cite{BoRi06a} and
Theorem 8 for the Johnson--Mehl tessellations in \cite{BoRi08} (see also \cite{BoRi09}). Since the proof is subtle and differs in many
details from that in \cite{BoRi06a} and \cite{BoRi08} we give a full proof.

\begin{rem*} In some sense the proof of Lemma \ref{stabco} is easier and shorter than that of the corresponding results in \cite{BoRi06a}
and \cite{BoRi08}. This is partly due to the fact that in our model the continuous object that has to be properly discretized (the time axis)
is one dimensional. This enables us to ``play'' with the order (in time) of the Poisson points. On the other hand, our model has some
extra complications, for example, there is no natural order on the arrow values assigned to the Poisson points (an arrow to the right is not always
better than an arrow to the left). Fortunately these issues can be handled quite smoothly.
\end{rem*}

\begin{lem}[(Stability coupling)]
\label{stabco}
Let $0 < q < q' < 1$. For each $n$ there is a coupling of
$\calP_{q}$ and $\calP_{q'}$ such that w.h.p. (i.e., with probability tending to $1$ as $n \rightarrow \infty$) the
following holds: For every $x \in L_n$ that has $\etxn = 1$ in
the first copy, there is a $(y,t) \in \Z^2 \times (-\infty,0)$ with $d(x,y) = \lfloor \sqrt n \rfloor$ or
$t = -\sqrt n$ such that there is a $\delta$-stable space--time path
in the second copy from  $(y,t)$ to $(x,0)$ and hence $\etxn$ also equals $1$ in the second copy.
\end{lem}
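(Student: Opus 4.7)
My plan is to couple $\calP_q$ and $\calP_{q'}$ monotonically so that every $q$-active path is automatically $q'$-active, and then to perturb such a path locally, using the extra arrows available in $\calP_{q'}$, into a path that is $\delta$-stable. On each time axis $\{y\}\times\R$ I place a common rate-$1$ Poisson process with i.i.d.\ Uniform$[0,1]$ marks $U$, and declare, under $\calP_p$, a point to be the arrow of type $j\in\{1,\ldots,4\}$ if $U\in[(j-1)p/4,jp/4)$ and a $*$ if $U\geq p$. Then the $q$-arrows sit (direction by direction) inside the $q'$-arrows, while the $q'$-stars sit inside the $q$-stars; the Poisson points whose mark lies in $[q,q')$ are $*$ in $\calP_q$ but ``extra arrows'' (of uniform random type) in $\calP_{q'}$. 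In particular, if $\etxn=1$ in the first copy, witnessed by a $q$-active path $\pi$ from the boundary of $B_n\times[-\sqrt n,0]$ to $(x,0)$, then $\pi$ is also active in $\calP_{q'}$, and the only remaining task is to upgrade it into a $\delta$-stable active path.

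I will call an arrow moment $(y,s)\in\pi$ \emph{defective} if there is a $q'$-star $(z,u)$ violating the $\delta$-stability condition, i.e., $d(y,z)\leq 1$ and $|u-s|<\delta$; the goal is to detour $\pi$ around each defect. For a defective $(y,s)$ carrying, say, a right-arrow, I look in a small interval $[s-\eta,s+\eta]$ on the time axis of $y$ (with $\eta$ a moderate multiple of $\delta$) for a nearby extra arrow of the same direction whose own $\delta$-neighbourhood is free of $q'$-stars; failing that, I look in the slightly larger window for a two-step detour through a spatial neighbour of $y$. The existence of such replacements with high probability reduces to routine Poisson-cell estimates: after tiling the relevant region by cells of time-extent $\asymp\delta$, each cell contains at most one Poisson point w.h.p., and because the extra-arrow density is a positive multiple of $q'-q$, any $O(1)$-sized window contains many extra arrows and few $q'$-stars.

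The main obstacle is that the rerouting is global rather than local: detours at different defects can compete for the same extra arrow, and each detour itself introduces new arrow moments which must, in turn, be $\delta$-stable. This is the subtle point that the remark preceding the lemma signals when it mentions that one-dimensionality of the time axes allows one to ``play with the order'' of the Poisson points. The device is exchangeability conditional on the Poisson positions: once one reveals which marks lie in $[q,1]$, the refinement into ``extra arrow'' ($U\in[q,q')$) versus ``genuine $*$'' ($U\geq q'$) is uniform and exchangeable, so I can reveal the labels sequentially along $\pi$, drawing the replacement arrow from fresh, independent randomness at each defective moment and thereby decoupling the reroutings. Combined with a direct Poisson estimate and a union bound over the (polynomially many) arrow moments of $\pi$ inside the relevant region, this yields probability $1-o(1)$ that the full rerouting succeeds and hence produces the desired $\delta$-stable $\calP_{q'}$-active path, provided $\alpha$ is chosen small enough that the extra-arrow slack dominates the defect density.
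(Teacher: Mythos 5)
Your starting point (a shared Poisson process with uniform marks, giving a monotone coupling under which every $q$-active path is $q'$-active) is the same as the paper's, but the repair step is where the argument has a genuine gap, and it is quantitative, not cosmetic. A given arrow of a witnessing path is defective with probability $\Theta(\delta)=\Theta(n^{-\alpha})$, and the witnessing paths for the various $x\in L_n$ together involve polynomially many arrows (a single path reaching spatial distance $\lfloor\sqrt n\rfloor$ already uses at least $n^{1/2}$ of them, and $ST(n)$ carries $\Theta(n^3)$ Poisson points), so w.h.p.\ there are polynomially many defects to repair. On the other side, a window of time-length $O(\delta)$ around a defect contains an extra arrow (mark in $[q,q')$) of the required direction with probability only $O(\delta\,(q'-q))$, i.e.\ w.h.p.\ it contains none, so your first fallback essentially always fails; enlarging the window to constant length puts the nearest usable replacement at time-distance $\Theta(1)$, so the detour must survive a constant stretch of the relevant time axes without meeting a $q'$-star and must itself be $\delta$-stable, which succeeds only with probability bounded away from $1$. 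A union bound over polynomially many defects cannot be closed with per-defect failure probabilities that are $\Theta(1)$, or even $\Theta(n^{-\alpha})$. Two side remarks in the same direction: the claim that each $\delta$-cell contains at most one Poisson point w.h.p.\ is false uniformly over the $\Theta(n^{3+\alpha})$ cells (the expected number of doubly occupied cells is $\Theta(n^{3-\alpha})$); and decreasing $\alpha$ makes $\delta$ \emph{larger} and defects \emph{more} frequent while leaving the extra-arrow density $(q'-q)$ unchanged, so your final proviso points the wrong way.

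The missing idea is that one should not search for honestly occurring extra arrows near each defect, but must modify the coupling itself so that extra arrows are guaranteed to be present wherever copy 1 is defect-prone, without disturbing either marginal. Your exchangeability/sequential-revelation device cannot achieve this: reordering the revelation of the marks does not increase the chance that a suitable extra arrow exists in a given window, and literally drawing replacement arrows from fresh randomness changes the law of the second copy. The paper's resolution is a cross-over at the level of clusters of Poisson points of size $O(\log n)$ (built from intervals of the coarser length $\delta_1=n^{-\alpha/2}$): conditionally on the cluster-level partial information, the bad event $B$ that two points of a cluster $\mathcal{C}$ lie within time-distance $\delta$ has probability at most $|\mathcal{C}|^2\,2\delta/\delta_1=|\mathcal{C}|^2\,2n^{-\alpha/2}$, while the event $G$ that \emph{every} point of $\mathcal{C}$ is an extra arrow has probability $(q'-q)^{|\mathcal{C}|}$, which dominates once $|\mathcal{C}|\le\beta'\log n$ with $\beta'$ small. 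One then exchanges the second-copy configuration between $B\setminus G$ and an equal-measure subset of $G\setminus B$; this preserves both marginals and guarantees that whenever copy 1 carries an active path through $\mathcal{C}$, copy 2 either agrees with the monotone coupling and the path is automatically stable, or consists entirely of arrows on $\mathcal{C}$ and hence admits a stable reroute. That measure-level exchange, rather than a realization-by-realization repair, is the step your proposal lacks; your closing inequality ``extra-arrow slack dominates defect density'' is the right intuition, but it only becomes usable in this conditional, per-cluster form.
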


\begin{pf}
Let $\de_1 = n^{-\al/2}$. So $\de \ll \de_1$.
We partition every
``time axis'' $\{x\} \times [-\infty,0]$, $x \in \Z^2$, in intervals
$\{x\} \times (-(k+1) \de_1, -k \de_1],   k = 0, 1, \ldots,$ of length $\de_1$.
From now on when we use the word ``interval,'' we will always mean an interval of the above form with $x \in B_n$ and
$(k+1) \de_1 \leq n$.
Note that the total number of intervals is $M_n := |B_n| \lfloor n/\de_1 \rfloor$.
Let ${\mathcal I}_n$ denote the union of these intervals.

Note that the total number of Poisson points in ${\mathcal I}_n$ is Poisson distributed with mean
$\de_1 M_n$.
To construct the coupling first draw a number $N$ according to the above-mentioned Poisson distribution.
Now assign $N$ points (called ``particles'') randomly, uniformly and independently of each other to the
above mentioned set~${\mathcal I}_n$. If a particle is assigned to the space--time location $(x,t)$, we say that its time coordinate is $t$.
Call an interval ``occupied'' if it has at least one particle.
Call two different intervals $\{x\} \times (-(k+1) \de_1, -k \de_1]$ and
$\{y\} \times (-(l+1) \de_1, -l \de_1]$ neighbors if $d(x,y)\leq 1$ and
$|k - l| \leq 1$.
This gives rise in an obvious way to the notion of ``clusters (of occupied intervals).''
(This notion of cluster is of course different from that introduced earlier in this paper.
Since this ``new'' notion of cluster is used only in this proof
and the other notion is not used here, this should not cause any confusion and we even use the same notation $\mathcal C$.)

We have already assigned to each
particle a precise location in ${\mathcal I}_n$.
However, we ``suppress'' this precise information and only ``keep'' the following {\em partial information}: for each interval the number of
particles assigned to it and for each occupied cluster of intervals the relative order (w.r.t. their time coordinates) of all particles in that cluster.
We also assign, with equal probabilities ($1/4$),
a {\em tentative} $\leftarrow$, $\rightarrow$, $\uparrow$ or $\downarrow$ to each particle (independent
of the other particles). The interpretation is that {\em if} eventually a particle is chosen to represent
an arrow, the type of arrow is exactly its above-mentioned tentative one.

\begin{rem*}
From now on when we mention a cluster $\mathcal C$, we
mean not only its corresponding set of intervals but also the above-mentioned partial information
about the particle locations as well as the tentative arrows assigned to the particles.
\end{rem*}

By the size of a cluster we mean the number of particles in the cluster.

\begin{clm*}
There is a constant $D = D(\al)$ such that
\begin{equation} \label{claim-eq}
\lim_{n \rightarrow \infty} P\bigl(\exists \mbox{ an occupied cluster with size } \geq D(\al)\bigr) = 0.
\end{equation}
\end{clm*}

\begin{pf}
Let $D$ be a positive integer. If the occupied cluster of a given interval~$e$ has size $\geq D$ there is
a connected set of $D$ (not necessarily occupied) intervals, such that $e$ is one of these intervals and the
number of particles in the union of these intervals is $\geq D$. Note that the number of choices for $e$ is
$M_n \leq n^4$ (for $n$ sufficiently large) and that for each
choice of $e$ the number of possible connected sets of $D$ intervals is
smaller than or equal to some constant $C(D)$ which depends on $D$ only. Further, the number of particles
in the union of $D$ given intervals is Poisson distributed with mean $D \delta_1 = D n^{- \al/2}$. So
the probability that this number of particles is $\geq D$ is at most $(D n^{- \al/2})^D$. Hence, the
probability that there is an occupied cluster of size $\geq $$D$ is at most
\[
n^4 C(D) (D n^{- \al/2})^D.
\]
If we take $D = \lceil 9/ \al \rceil$, this probability goes indeed to $0$ as $n \rightarrow \infty$.
This proves the above claim.
\end{pf}

Let $\mathcal C$ be a cluster in the sense given in the remark above. Now consider for both parameter values, $q$ and $q'$, the
conditional distribution of the precise
configuration for $\mathcal C$, that is, the types ($*$, $\leftarrow$, $\rightarrow$, $\uparrow$ or $\downarrow$) and precise locations of all
particles in $\mathcal C$, given the partial information.
The two conditional distributions can be coupled by the following natural procedure which gives two ``typical realizations'' (one
for each of the two parameter values) of the precise configuration.

The first step in this procedure is to assign to each particle $i$,
independent of the other particles, a
random variable $U_i$ uniformly distributed on $(0,1)$. These variables will be used below to decide if a particle
corresponds with an arrow or with a $*$.

The next step is to go from relative order of positions to precise positions.
Consider the conditional distribution of the precise time coordinates of the particles of $\mathcal C$, given their (already known) relative
order in time and the intervals they are located in. Now simply assign the precise locations by drawing from this distribution.
Later we will refer to this procedure as the ``time assignment procedure.''

Note that both steps above are the same for both ``realizations,'' the one for parameter $q$ and the one with parameter $q'$.
However, the next and final step in which the types of the particles are
fully determined will take into account the parameter value: for each particle $i$ of $\mathcal C$ do the following:
If $U_i < q$, the type of $i$ in each of the two copies is equal to the earlier mentioned
{\em tentative} arrow. If $U_i \in (q,q')$, its type is $*$ in copy 1 and equal to the tentative arrow in copy 2.
If $U_i > q'$, the type is $*$ in both copies.

Now we have two realizations, say $\om_{\mathcal C}(1)$ and $\om_{\mathcal C}(2)$, and
it is easy to see that they are ``typical'' w.r.t. the two conditional distributions mentioned above (the first for
parameter $q$, the second for parameter $q'$). So we
indeed have a coupling
of these two conditional distributions. Also note that $\om_{\mathcal C}(2) \geq \om_{\mathcal C}(1)$ in
the sense that the particle locations are exactly the same and each particle in $\om_{\mathcal C}(1)$
that has an arrow-type has the same arrow-type in $\om_{\mathcal C}(2)$. Let this coupling
be denoted by $P_{\mathcal C}$.

Doing this for each cluster, independently of the other clusters, gives a natural coupling of the two probability measures
in the statement of the lemma. However, it is not yet what
we want. Although it satisfies the property between brackets at the end of the lemma, it does not necessarily satisfy the stability property in the lemma.
The coupling we do want is obtained as follows where we go back to the level of a given cluster $\mathcal C$.
Recall the two copies $\om_{\mathcal C}(1)$ and $\om_{\mathcal C}(2)$ above and their
joint distribution $P_{\mathcal C}$.
From $P_{\mathcal C}$ we will construct a modified distribution ${\tilde P}_{\mathcal C}$ of which the two marginal distributions are the same
as those of $P_{\mathcal C}$. To avoid an abundance of notation we will drop the subscript $\mathcal C$ from $\om_{\mathcal C}(1)$
and $\om_{\mathcal C}(2)$.

Recall the time assignment procedure in the second step of the construction of $P_{\mathcal C}$.
Let $B$ be the event that in $\om(1)$ [and hence, since the particle locations for $\om(1)$
and $\om(2)$ are the same, also in $\om(2)$] there
are two different particles in $\mathcal C$ whose time coordinates differ at most $\de$.
The probability of $B$ (or, more precisely, the conditional probability of $B$ given the partial information on $\mathcal C$)
is maximal if $\mathcal C$ consists of one interval only,
in which case it is less than or equal to\vspace*{-2pt}
${|\mathcal C|}^2 \frac{2 \delta}{\delta_1} = {|\mathcal C|}^2 2 n^{-\al/2},$
where $|\mathcal C|$ denotes the number of particles in $\mathcal C$;
so
%
\begin{equation}
\label{pcb}
P_{\mathcal C}(B) \leq  {|\mathcal C|}^2 2 n^{-\al/2}.
\end{equation}

Recall the use of the variables $U_i$ in the determination of the types of the points.
Let $G$ be the event that
each particle in $\om(1)$ is of type $*$ and each particle in $\om(2)$ has an arrow type.
Note that this event happens if and only if
$U_i \in (q,q')$ for all particles $i$ in $\mathcal C$ so that we have
\[
P_{\mathcal C}(G) = (q'-q)^{|\mathcal C|}.
\]

By this and \eqref{pcb} we have (with $D = D(\al)$ as in the claim above)
%
\begin{equation}
\label{BG-ineq}
P_{\mathcal C}(G) \geq P_{\mathcal C}(B)\qquad \mbox{if } |{\mathcal C}| \leq D
\end{equation}
and $n$ is sufficiently large.
From now on we assume in this proof that $n$ is indeed sufficiently large in this sense.

Now let $B'$ denote $B \setminus G$.
If $|{\mathcal C}| \leq D$ then by \eqref{BG-ineq}
there is a measurable subset $G' \subset G \setminus B$
and a 1--1 map $\psi \dvtx B' \rightarrow G'$ with the property that $\psi$ and
$\psi^{-1}$ are $P_{\mathcal C}$-preserving. To each pair
$(\om(1), \om(2)) \in B'$ this map assigns the pair
\[
\psi(\om(1),\om(2)) = \bigl((\psi(\om(1),\om(2)))(1),(\psi(\om(1),\om(2)))(2)\bigr).
\]
Now a modified coupling called ${\tilde P}_{\mathcal C}$ is obtained from $P_{\mathcal C}$ by
exchange between $B'$ and $G'$ of the second copy, using the map $\psi$ as follows. (Such
type of modification is called a ``cross-over'' in \cite{BoRi06a}.)
If ${|\mathcal C|} \geq D$ we simply take ${\tilde P}_{\mathcal C} = P_{\mathcal C}$.
Otherwise,
a typical pair $(\tilde\om(1), \tilde\om(2))$ under ${\tilde P}_{\mathcal C}$ is drawn as follows. First draw a pair
$(\om(1), \om(2))$ under $P_{\mathcal C}$. If $(\om(1),\om(2)) \in (B' \cup G')^{c}$, take $(\tilde\om(1), \tilde\om(2))$
equal to $(\om(1), \om(2))$.
If $(\om(1),\om(2)) \in B'$, take $\tilde\om(1) = \om(1)$  and $\tilde\om(2) = (\psi(\om(1),\om(2)))(2)$. Finally, if
$(\om(1),\om(2)) \in G'$ take $\tilde\om(1) = \om(1)$ and $\tilde\om(2) = (\psi^{-1}(\om(1),\om(2)))(2)$.
Since in all cases $\tilde\om(1) = \om(1)$, it is immediate that the first marginal of ${\tilde P}_{\mathcal C}$ is equal to that of
 $P_{\mathcal C}$.
A short inspection shows that also the second marginal of ${\tilde P}_{\mathcal C}$ is equal to that of $P_{\mathcal C}$.

Now the ``overall'' coupling of
$\calP_{q}$ and $\calP_{q'}$ announced in the statement of the lemma is obtained in a natural and straightforward way
by constructing the pair $({\tilde\om}_{\mathcal C}(1), {\tilde\om}_{\mathcal C}(2)$ for each cluster $\mathcal C$ separately, independently of the other clusters.

To check the required properties of this coupling first look again at one single cluster $\mathcal C$.
Suppose that $|{\mathcal C}| \leq D$.
Let $(\om(1),\om(2))$ and the corresponding pair $(\tilde\om(1),\tilde\om(2))$ be as above. So, in particular,
$\tilde\om(1) = \om(1)$. Suppose that
$\tilde\om(1)$ has a certain active space--time path $\pi$ within $\mathcal C$. Note that $\pi$ is also
an active space--time path for $\om(1)$ and [because $\om(2) \geq \om(1)$ in the sense mentioned earlier in this proof]
also for $\om(2)$. For our purpose we may assume
that $\pi$ is part of a path that guarantees for some $x \in L_n$, that $\eta_x^{(n)} = 1$
(see the statement of Lemma \ref{stabco}). Therefore, by considering a trajectory of this longer path between entering
and leaving the cluster, we may assume that $\pi$ starts at the bottom
of some interval and ends at the top of some interval.  We will show that $\tilde\om(2)$
has a $\delta$-stable space--time path $\tilde\pi$ that ``corresponds'' with $\pi$. More precisely, although the jump-times
of the path $\tilde\pi$ may differ a bit
from the corresponding jump times of $\pi$, it will start and end at the same space--time points as the beginning,
respectively end, of $\pi$.

First we assume that $\pi$ makes at least one jump.
Since $\om(1)$ has at least one arrow in $\mathcal C$, $(\om(1),\om(2))$ is not in $G$, so we have only the
following two possible cases:
\begin{longlist}[(ii)]
\item[(i)] If $(\om(1),\om(2)) \in B' = B \setminus G$, then its image under the map $\psi$ is in $G \setminus B$.
Hence, since the relative order and the tentative arrow types of all the particles are fixed and by the definition of $G$
no particle in $\tilde\om(2)$ has a $*$, there is indeed a natural
path $\tilde \pi$ in the configuration $\tilde\om(2)$ that corresponds with $\pi$. Moreover, by the
definition of $B^c$ no two particles in
$\tilde\om(2)$ have time coordinates that differ at most $\delta$ and hence $\tilde \pi$ is $\delta$-stable.

\item[(ii)] If $(\om(1),\om(2)) \in B^c \cap G^c$, we have $\tilde\om(2) = \om(2)$. From the definition of
$B^c$ it
follows that $\pi$ itself is $\delta$-stable so we can take $\tilde\pi$ equal to $\pi$.
\end{longlist}

Now suppose $\pi$ makes no jump. So $\pi$ is, in fact, the union of a finite number of consecutive intervals on
the time axis of a vertex.
Note that by definition of a cluster each of these intervals has
at least one particle. Hence, $(\om(1), \om(2))$ is not in $G$ because otherwise in the configuration
$\om(1)$ each of these intervals would
have a $*$ which contradicts the fact that $\pi$ is an active path. If it is not in $B$ either,
$\tilde\om(2) = \om(2)$ and we can simply take $\tilde\pi = \pi$.
Finally, if $(\om(1), \om(2))$ is in $B\setminus G$, then its image is in $G\setminus B$ so $\tilde\om(2)$ has no
$*$ particles and again the conclusion follows immediately.

Using the above-mentioned $\delta$-stability property of the single-cluster couplings
yields a similar property for the ``overall'' coupling of $\calP_{q}$ and $\calP_{q'}$.
The only thing that could go ``wrong'' is if there is a cluster with size $\geq D(\alpha)$. However, by
the claim, this has probability going to $0$ as $n \rightarrow \infty$. The proof of Lemma \ref{stabco} is complete.
\end{pf}

We proceed with the proof of Theorem \ref{mainthm}.
Fix a value $\hat q$ in the interval $(q_1,q_2)$, where $q_1 = 4 \la_1/(4 \la_1 + 1)$ as before [see
below \eqref{lbQcr}] and $q_2 = 4 \la_2/(4 \la_2 + 1)$.

Now we are ready to introduce $0-1$ valued random variables to which we can apply the results in Section \ref{Sharp-threshold results}.
Let the box $B_n$ and the space--time region $\mathit{ST}(n)$ be as before (see a few lines before Definition \ref{stab-def}).
Now partition every
time axis in intervals of length $\de$, with $\de$ as defined just before Definition \ref{stab-def}.

As before, we have on each time axis a Poisson point process with density $1$ and each Poisson point is, independently of the
others, of type $*$ with probability $1-q$ and of each of the types $\rightarrow$, $\leftarrow$, $\uparrow$, $\downarrow$ with
probability $q/4$.
Let $v \in B_n$ and $k \in \N$,
$0 \leq k \leq n/\de$. By the $k$th interval of $v$ for the above-mentioned partition,
we will mean $\{v\} \times (-k \de, (-k+1) \de]$,
and we define
\[
X_{*}^{(v,k,\de)} := I\{\exists \mbox{ a Poisson point of type } * \mbox{ in the } k \mbox{th interval of } v\}.
\]

Similarly define
\[
X_{\rightarrow}^{(v,k,\de)} := I\{\exists \mbox{ a Poisson point of type } \rightarrow \mbox{ in the } k \mbox{th interval of } v\}
\]
and, analogously,
$X_{\leftarrow}^{(v,k,\de)}$, $X_{\uparrow}^{(v,k,\de)}$ and $X_{\downarrow}^{(v,k,\de)}$.
Note that this is a collection of independent $0-1$ valued random variables.

Recall the definition of $\et_v^{(n)}$ and ${\bnu}_{\langle q\rangle}^{(n)}$ below equation \eqref{lbQcr}.
The $X$ variables defined above give only ``crude'' information about the space--time diagram; they
tell which of the types $*$, $\rightarrow$, etc., occur in each interval but they
do not tell their precise locations inside the intervals. Nevertheless, this
incomplete information is often enough to conclude that there is a certain space--time path.
Let $\et_v^{(n,\de)}$ be the indicator of the event that the
values of the $X^{(\cdot,\cdot,\de)}$ variables imply that $\et_v^{(n)} = 1$.

\begin{rem*} Note that if $\et_v^{(n)} = 1$ then,
after for some Poisson points with mark~$*$, this
mark is replaced by an arrow still $\et_v^{(n)} = 1$. The same remark holds for $\et_v^{(n,\de)}$ instead
of $\et_v^{(n)}$.
\end{rem*}

It is easy to see that
\begin{eqnarray}
\label{etxnI-bound}
\et_v^{(n)} \geq \et_v^{(n,\de)} &\geq& I\bigl\{\exists (w,t) \mbox{ with } d(v,w)  = \bigl\lfloor \sqrt n \bigr\rfloor \mbox{ or } t = -\sqrt n
\mbox{ s.t.}\nonumber
\\[-8pt]\\[-8pt]
  &&\hspace*{10pt} \exists   \delta \mbox{-stable space--time path from } (w,t) \mbox{ to }(v,0)\bigr\}.\nonumber
\end{eqnarray}

Hence, with the following notation (where $R$ is a box)
\[
H^{(n,\de)}(R) := \bigl\{ \exists   \et^{(n,\de)} \mbox{-occupied horizontal crossing of } R \bigr\},
\]
we get
\begin{eqnarray*}
\nonumber
 && {\mathcal P}_{\hat q}\bigl(H^{(n_i,\de)}(L_{n_i})\bigr) \geq
{\mathcal P}_{q_1}\bigl(\exists   \et^{(n_i)} \mbox{-occupied horizontal crossing of } L_{n_i}\bigr) - \varepsilon(n_i)
\\
  &&\qquad = {\bnu}_{\langle q_1\rangle}^{(n_i)}\bigl(H(L_{n_i}) - \varepsilon(n_i)\bigr), \nonumber
\end{eqnarray*}
where $\varepsilon(n)$ is a function of $n$ that goes to $0$ as $n \rightarrow \infty$ and where
the inequality comes from the second inequality in \eqref{etxnI-bound} and Lemma \ref{stabco} and the equality comes directly from the definitions.

By \eqref{lbLcr2sig}, and obvious monotonicity [see the Remark preceding \eqref{etxnI-bound}], this gives the following
lemma.

\begin{lem}
\label{HILni-bound}
For each choice of $\al$ the following holds for all sufficiently large~$i$:

\begin{equation}
\label{HILni-bound-eq}
{\mathcal P}_q\bigl(H^{(n_i, \de)}(L_{n_i})\bigr) > \frac{\tep}{2}, \qquad    q \geq {\hat q}.
\end{equation}
\end{lem}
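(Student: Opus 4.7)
The plan is to chain \eqref{lbLcr2sig} with the stability coupling of Lemma \ref{stabco}, and then to upgrade from $q = \hat q$ to all $q \geq \hat q$ by monotonicity. Most of the substantive work is already done in Lemma \ref{stabco}; the present lemma merely combines it with \eqref{lbLcr2sig} and a soft monotonicity observation.

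First I would apply Lemma \ref{stabco} with parameters $q_1 < \hat q$ to obtain, for each $n_i$, a coupling of $\calP_{q_1}$ and $\calP_{\hat q}$ under which, with probability $1 - o(1)$ as $i \to \infty$, every $x \in L_{n_i}$ with $\et_x^{(n_i)} = 1$ in the first copy is reached by a $\de$-stable active path in the second copy from some $(y,t)$ with $d(x,y) = \lfloor \sqrt{n_i}\rfloor$ or $t = -\sqrt{n_i}$. On that good event a horizontal $\et^{(n_i)}$-crossing of $L_{n_i}$ in the first copy forces, via the second inequality in \eqref{etxnI-bound}, a horizontal $\et^{(n_i,\de)}$-crossing of $L_{n_i}$ in the second copy, and hence the occurrence of $H^{(n_i,\de)}(L_{n_i})$ under $\calP_{\hat q}$. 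Combined with \eqref{lbLcr2sig} this yields
\[
  \calP_{\hat q}\bigl(H^{(n_i,\de)}(L_{n_i})\bigr) \,\geq\, {\bnu}_{<q_1>}^{(n_i)}\bigl(H(L_{n_i})\bigr) - o(1) \,>\, \tep - o(1),
\]
which exceeds $\tep/2$ for all $i$ large enough.

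To extend to any $q \geq \hat q$, I would invoke the monotonicity recorded in the Remark before \eqref{etxnI-bound}: $\et_v^{(n,\de)}$, and hence the indicator of $H^{(n_i,\de)}(L_{n_i})$, is non-decreasing under replacing a $*$-mark by an arrow-mark at a Poisson point. Under the natural coupling of $\calP_{\hat q}$ and $\calP_q$ that keeps the Poisson point locations common and, independently at each currently-$*$ point, promotes it to a uniformly random-direction arrow with probability $(q-\hat q)/(1-\hat q)$, the event $H^{(n_i,\de)}(L_{n_i})$ is preserved, giving $\calP_q(H^{(n_i,\de)}(L_{n_i})) \geq \calP_{\hat q}(H^{(n_i,\de)}(L_{n_i})) > \tep/2$. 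I do not anticipate any additional obstacle: the one nontrivial ingredient (discretization-error control in passing from $\et^{(n)}$ to $\et^{(n,\de)}$) is already absorbed in Lemma \ref{stabco}, so only bookkeeping remains.
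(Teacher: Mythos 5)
Your proposal is correct and follows essentially the same route as the paper: couple $\calP_{q_1}$ with $\calP_{\hat q}$ via Lemma \ref{stabco}, pass from $\et^{(n_i)}$-crossings to $\et^{(n_i,\de)}$-crossings through the second inequality in \eqref{etxnI-bound} together with \eqref{lbLcr2sig}, and then extend to all $q \geq \hat q$ by the monotonicity noted in the Remark preceding \eqref{etxnI-bound}. No gaps.
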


Now we ``wrap around the box $B_{n_i}$ horizontally'' by identifying every vertex $(6 n_i, y)$ on $B_{n_i}$ with the vertex $(0,y)$ thus turning this box
into a cylinder.
Define $\et_v^{(n_i, \de, C)}$ as the natural analog for the cylinder of $\et_v^{(n_i,\de)}$.

\begin{rem*}``By the truncation to distance $\sqrt n$ of these variables'' and because the left- and right-hand side of $L_{n_i}$ have distance larger
than $\sqrt n$ to the boundary of $B_{n_i}$,
the event that there is a $\et_{\cdot}^{(n_i, \de, C)}$-occupied horizontal crossing of $L_{n_i}$ and the event that
there is a $\et_{\cdot}^{(n_i, \de)}$-occupied horizontal crossing of $L_{n_i}$ are the same.
\end{rem*}

Let $A^{(n_i,\de)}$ be the event that at least one of the $(6 n_i -1)$ horizontal translates
of $L_{n_i}$ on this cylinder has an $\et_{\cdot}^{(n_i, \de, C)}$-occupied horizontal
crossing.
Note that the event $A^{(n_i,\de)}$ is still defined in terms of the random variables
$X^{{\cdot},k, \de}$ defined earlier. Moreover, this event is increasing in the $X$ variables corresponding
with arrows and decreasing in those corresponding with $*$'s. For each choice of $\al$ the
following holds for all sufficiently large~$i$:
\begin{equation}
\label{Ani-bound-eq}
{\mathcal P}_q\bigl(A^{(n_i, \de)}\bigr) \geq {\mathcal P}_q\bigl(H^{(n_i, \de)}(L_{n_i})\bigr) > \frac{\tep}{2},  \qquad   q \geq {\hat q},
\end{equation}
where the first inequality is (taking into account the above remark) trivial and the last inequality is exactly Lemma \ref{HILni-bound}.

As stated before the $X$ variables are independent $0-1$ valued random variables.
Further, for each $v$ and $k$, $X_*^{(v,k, \de)}$ has probability $1 - \exp(-(1-q) \de)$ to be $1$.
Each random variable $X_{\rightarrow}^{(v,k, \de)}$ has probability $1 - \exp(-\de q/4)$ to be $1$.
The same holds for the other three arrow types.

Also note that the event $A^{(n_i, \de)}$ is partially symmetric in the following sense:
for fixed value $k$ and fixed $0 \leq l \leq 3 n$ all variables $X_{\rightarrow}^{v,k, \de}$ with $v \in B_n$ with $y-$ coordinate $l$,
``play the same role.'' In particular,
each of them has the same probability to be pivotal for the event $A^{(n_i, \de)}$.
The same statement holds for each of the other three arrow types and for type $*$. Further note that
for each $k$ and $l$ the number of such random variables $X_{\rightarrow}^{(v,k, \de)}$ is of order $n$.
Again, the same statement holds for each of the other types.

We will apply Corollary \ref{talm-cor-dif} with $m$ equal to our ``current'' $n$ and with $p_1$ and $p_2$
equal to
$1 - \exp(-\de q/4)$ and $1 - \exp(-(1-q) \de)$, respectively. For our purpose we should think
of $n$ as very large and hence, $\de$ very small. For fixed $n$ (and hence, $\delta$), the $p_1$ and
$p_2$ above are functions of $q$ and
\[
\frac{d p_1}{d q} = \frac{\de}{4} \exp(- \de q/4)
\]
which is of order $\delta$. More precisely, there are positive constants $C'$ and $C''$ such that
\[
C' \de \leq \frac{d p_1}{d q} \leq C'' \de\qquad \mbox{for all } \de \in (0,1) \mbox{ and }
q \in [\hat q, q_2).
\]
Similarly, $p_1$ and $p_2$ are also of order $\de$ and $d p_2 / d q$ is of order $-\de$.
Therefore, when we take the derivative with respect to $q$ of the probability of the event $A^{(n_i, \de)}$,
the factor of order $\de$ that comes from $\max(p_1,p_2)$ in the denominator in the right-hand side of
\eqref{talm-cor-dif-eq} is canceled by a factor of order $\de$ that comes from
$d p_1 / d q$ and $d p_2 / d q$.
Essentially the only ``remaining'' effect of $\de$ comes
from the logarithmic expression in the denominator in the right-hand side of \eqref{talm-cor-dif-eq}.
More precisely what we get is
%
\begin{equation}
\label{diff-bound}
\quad\ \frac{d}{d q} {\mathcal P}_q\bigl(A^{(n_i, \de)}\bigr) \geq \frac{ C_9    {\mathcal P}_q(A^{(n_i, \de)})
(1 - {\mathcal P}_q(A^{(n_i, \de)}) \log n}{\log(2/\de)},  \qquad   q \in[\hat q, q_2),
\end{equation}
where $C_9 > 0$ depends on $\hat q$ and $q_2$ only.

Let $\ep^* > 0$. By \eqref{Ani-bound-eq}, \eqref{diff-bound} and because ${\mathcal P}_q(A^{(n_i, \de)})$ is
clearly nondecreasing in $q$,
it follows that, for every choice of $\al$, the following holds for all sufficiently large~$i$:
If ${\mathcal P}_{q_2}(A^{(n_i, \de)}) < 1 - \ep^*$ then, for all $q \in [\hat q, q_2)$,
\[
\frac{d}{d q} {\mathcal P}_q\bigl(A^{(n_i, \de)}\bigr) \geq C_9 \frac{\tilde\ep}{2} \ep^*
\frac{\log n}{\log(2/\de)} \geq \frac{C_{10} \tilde\ep \ep^*}{\al}
\]
(where the last inequality used that $\de = n^{-\al}$) and hence,
\[
{\mathcal P}_{q_2}\bigl(A^{(n_i, \de)}\bigr) \geq (q_2 - {\hat q}) C_{10} {\tilde\ep} \ep^*/ \al.
\]

By choosing $\al$ sufficiently small this gives the following lemma.

\begin{lem}
\label{q2Ani-bound}
For every $\ep^* > 0$ there is an $\al >0$ such that for all sufficiently large~$i$
\begin{equation}
{\mathcal P}_{q_2}\bigl(A^{(n_i, \de)}\bigr) > 1 - \ep^*.
\end{equation}
\end{lem}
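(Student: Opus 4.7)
The proof is essentially an integration-of-the-differential-inequality argument, and the paragraph immediately preceding the lemma already carries out most of it; what remains is to package it cleanly.

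The plan is as follows. Suppose for contradiction that for arbitrarily large $i$ we have ${\cal P}_{q_2}(A^{(n_i,\de)}) \leq 1 - \ep^*$. The map $q \mapsto {\cal P}_q(A^{(n_i,\de)})$ is non-decreasing in $q$ (this comes from a standard monotone coupling of the marked Poisson processes: increasing $q$ can only turn $*$-marks into arrow-marks, and the event $A^{(n_i,\de)}$ is increasing in that order, as noted in the Remark before \eqref{etxnI-bound}). Combining monotonicity with Lemma \ref{HILni-bound} (via \eqref{Ani-bound-eq}), for all sufficiently large $i$ and for every $q \in [\hat q, q_2]$ we get
\begin{equation*}
\frac{\tep}{2} \,\leq\, {\cal P}_q(A^{(n_i,\de)}) \,\leq\, 1 - \ep^*,
\end{equation*}
so that ${\cal P}_q(A^{(n_i,\de)})\bigl(1 - {\cal P}_q(A^{(n_i,\de)})\bigr) \geq (\tep/2)\,\ep^*$ throughout this interval.

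Next, substitute this lower bound into the differential inequality \eqref{diff-bound}. Since $\de = n^{-\al}$, we have $\log(2/\de) = O(\al \log n)$, and hence
\begin{equation*}
\frac{d}{dq}{\cal P}_q(A^{(n_i,\de)}) \,\geq\, C_9 \,\frac{\tep}{2}\,\ep^*\,\frac{\log n_i}{\log(2/\de)} \,\geq\, \frac{C_{10}\,\tep\,\ep^*}{\al}
\end{equation*}
uniformly in $q \in [\hat q, q_2]$. Integrating from $\hat q$ to $q_2$ yields
\begin{equation*}
{\cal P}_{q_2}(A^{(n_i,\de)}) - {\cal P}_{\hat q}(A^{(n_i,\de)}) \,\geq\, (q_2 - \hat q)\,\frac{C_{10}\,\tep\,\ep^*}{\al}.
\end{equation*}
Since the left-hand side is at most $1$, while the right-hand side tends to $\infty$ as $\al \downarrow 0$, we reach a contradiction as soon as $\al$ is chosen small enough (depending only on $\ep^*$, $\tep$, $\hat q$, $q_2$, and the constant $C_{10}$). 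This proves the lemma.

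The only mildly delicate point is the monotonicity $q \mapsto {\cal P}_q(A^{(n_i,\de)})$, which is not explicitly stated earlier; it is straightforward via the natural coupling (enlarge $q$ means each Poisson point is, independently, re-assigned from $*$ to an arrow with the appropriate probability), together with the observation that $A^{(n_i,\de)}$ is monotone under such replacements. Once that is in hand, all the remaining quantitative content is already contained in \eqref{Ani-bound-eq} and \eqref{diff-bound}, and the argument reduces to choosing $\al$ small enough that the integrated derivative would exceed the trivial bound of $1$.
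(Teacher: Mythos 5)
Your argument is correct and is essentially identical to the paper's: the paper also assumes ${\cal P}_{q_2}(A^{(n_i,\de)}) < 1-\ep^*$, uses monotonicity in $q$ together with \eqref{Ani-bound-eq} to bound ${\cal P}_q(A^{(n_i,\de)})(1-{\cal P}_q(A^{(n_i,\de)}))$ from below on $[\hat q, q_2)$, feeds this into \eqref{diff-bound} with $\log(2/\de)\asymp\al\log n$, and integrates to get a contradiction for $\al$ small. (The monotonicity you flag as ``mildly delicate'' is in fact asserted in the paragraph preceding the lemma, with the same coupling justification you give.)
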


Now if there is a horizontal crossing of one of the above-mentioned translates of $L_{n_i}$,
there must be a horizontal crossing in the ``hard'' direction of at least one of the following (six) translates
(on the cylinder) of the rectangle $[0, 3 n_i] \times [n_i, 2 n_i]$:
\[
[j n_i, (j+3)n_i (\operatorname{mod} 6 n_i)] \times [n_i, 2 n_i],   \qquad 0 \leq j \leq 5.
\]

Hence, by the usual ``square root trick,''
\[
{\mathcal P}_{q_2}\bigl(H^{(n_i, \de)}([0, 3 n_i] \times [n_i, 2 n_i])\bigr) \geq
1 - \bigl(1 - {\mathcal P}_{q_2}\bigl(A^{(n_i, \de)}\bigr)\bigr)^{1/6}
\]
which, combined with Lemma \ref{q2Ani-bound}, immediately gives that for every $\ep^* >0$ there is an $\al >0$ s.t. for
all sufficiently large~$i$
%
\begin{equation}
\label{semi-fin-eq}
{\mathcal P}_{q_2}\bigl(H^{(n_i, \de)}([0, 3 n_i] \times [0, n_i])\bigr) > 1 - \ep^*.
\end{equation}

Finally the following proposition is obtained.

\begin{prop}
\label{H-bound}
%
\begin{equation}
\label{fin-eq}
\lim_{i \rightarrow \infty} {\bar \nu}_{\la_2}(H(3 n_i, n_i)) = 1.
\end{equation}
\end{prop}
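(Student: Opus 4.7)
The plan is to string together the quantitative estimates that have already been set up. The target probability ${\bar\nu}_{\la_2}(H(3n_i, n_i))$ equals ${\bnu}_{<q_2>}(H(3n_i, n_i))$ by the translation \eqref{q-la-trans}, so it suffices to show that ${\bnu}_{<q_2>}(H([0,3n_i]\times[0,n_i]))\to 1$ along the subsequence.

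First I would move from the discretized crossing event to the truncated one. By \eqref{etxnI-bound} we have $\et_v^{(n_i)}\geq \et_v^{(n_i,\de)}$ pointwise; hence the event $H^{(n_i,\de)}([0,3n_i]\times[0,n_i])$ is contained in the event that $\et^{(n_i)}$ has a horizontal crossing of the same rectangle. Thus \eqref{semi-fin-eq} yields, for every $\ep^*>0$ and an appropriate $\al$, that
\begin{equation*}
{\bnu}_{<q_2>}^{(n_i)}\bigl(H([0,3n_i]\times[0,n_i])\bigr) \geq {\cal P}_{q_2}\bigl(H^{(n_i,\de)}([0,3n_i]\times[0,n_i])\bigr) > 1-\ep^*
\end{equation*}
for all sufficiently large $i$.

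Second, I would pass from the truncated measure back to $\bnu_{<q_2>}$ using \eqref{expnsig}. The event $H([0,3n_i]\times[0,n_i])$ depends only on $\si_\La$ with $\La=[0,3n_i]\times[0,n_i]$, so $|\La|=3n_i^2+O(n_i)$ and
\begin{equation*}
\bigl|{\bnu}_{<q_2>}(H)-{\bnu}_{<q_2>}^{(n_i)}(H)\bigr| \leq d_V\bigl({\bnu}_{<q_2>;\La},{\bnu}_{<q_2>;\La}^{(n_i)}\bigr) \leq 3n_i^2\, C_7 \exp(-C_8 n_i^{1/2}),
\end{equation*}
which tends to $0$ as $i\to\infty$. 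Combining the two bounds gives $\bnu_{<q_2>}(H([0,3n_i]\times[0,n_i]))\geq 1-\ep^*-o(1)$, and since $\ep^*>0$ was arbitrary the $\limsup$ (hence the limit, since the quantity is bounded by $1$) equals $1$.

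There is no real obstacle here; the content has all been extracted in \eqref{semi-fin-eq}, and the only thing to check is that the comparisons are used in the correct monotone direction. Specifically, the discretization \emph{loses} occupied crossings (so the inequality $\et^{(n_i)}\geq \et^{(n_i,\de)}$ pushes the probability \emph{up} when we remove the discretization), while the truncation error in \eqref{expnsig} is polynomially sized in $n_i$ against a stretched-exponentially small $d_V$, so it is negligible on the rectangle of volume $O(n_i^2)$.
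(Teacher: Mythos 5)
Your proposal is correct and follows essentially the same route as the paper's proof: translate to the $q$-parametrization via \eqref{q-la-trans}, compare the discretized crossing event to the truncated one via the first inequality in \eqref{etxnI-bound}, invoke \eqref{semi-fin-eq}, and remove the truncation via \eqref{expnsig}. The only difference is the order in which you apply the last two comparisons, and you correctly track the monotone directions and the $\ep^*$-dependent choice of $\al$.
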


\begin{pf}
Let $\ep^* > 0$ be given.
By \eqref{q-la-trans} (and the definition of $q_2$),
$\bnu_{\la_2}(H(3 n_i,\break n_i)) = \bnu_{\langle q_2\rangle}(H(3 n_i, n_i))$. Hence, by \eqref{expnsig},
$\liminf_{i \rightarrow \infty} \bnu_{\la_2}(H(3 n_i, n_i))$ is equal to $\liminf_{i \rightarrow \infty} {\bnu}_{\langle q_2\rangle}^{(n_i)}(H(3 n_i, n_i))$
which by the first inequality in \eqref{etxnI-bound} is larger than or equal to
\[
\liminf_{i \rightarrow \infty} {\mathcal P}_{q_2}\bigl(H^{(n_i, \de)}([0, 3 n_i] \times [0, n_i])\bigr).
\]
This last expression is, by (the statement ending with) \eqref{semi-fin-eq} and a suitable choice of $\al$,
larger than $1 - \ep^*$.
Summarizing, we have that for every $\ep^* >0$, $\liminf_{i \rightarrow \infty} \bnu_{\la_2}(H(3 n_i, n_i))$ is larger than $1 - \ep^*$.
\end{pf}

Proposition \ref{H-bound}, together with the finite-size criterion Lemma \ref{lem-fs}, immediately yields
${\bnu}_{\la_2}(|\calC_O| = \infty) > 0$
which, as observed in the beginning of this section, completes the proof of Theorem \ref{mainthm}. \\

\section*{Acknowledgments} I thank Oliver Riordan for a very useful discussion concerning influence and sharp-threshold results and in particular for
pointing out Corollary \ref{talm-cor} for general $m$. The work in this paper was partly done during visits to
the Isaac Newton Institute in June 2008, the Institut Henri Poicar\'e in October 2008 and the Institut Mittag Leffler in April and June 2009.
I thank these institutes for their support and hospitality. Finally, I thank Jeff Steif and Geoffrey Grimmett for
several useful discussions and an anonymous referee for many valuable detailed comments and suggestions for improvement
on an earlier version of this paper.


\printaddresses

\end{document}